\newtheorem{theorem}{Theorem}
\newtheorem{lemma}{Lemma}
\newcommand{\Div}{\operatorname{div}}
\def\dO{\partial\Omega }
\begin{document}

\title{A connection between filter stabilization and eddy viscosity models}
\author{Maxim A. Olshanskii \thanks{
Department of Mathematics, University of Houston, Houston, TX 77204-3008 and Dept. Mechanics and Mathematics, Moscow State
University, Moscow 119899
email: \texttt{molshan@math.uh.edu}; partially supported by the RFBR grants 12-01-00283, 11-01-00767, 12-01-91330.}  \and Xin Xiong \thanks{ Department of Mathematics, University of Pittsburgh, Pittsburgh 15260, PA,  USA; email: \texttt{xix21@pitt.edu}; partially supported by the NSF under grant DMS-0810385} }
\date{}
\maketitle

\begin{abstract}
Recently, a new approach for the stabilization of the incompressible
Navier-Stokes equations for higher Reynolds numbers was introduced based on
the nonlinear differential filtering of solutions on every time step of a discrete scheme.
In this paper, the stabilization is shown to be equivalent to a certain eddy-viscosity
model in LES. This allows a refined analysis and further understanding of
desired filter properties. We also consider the application of the
filtering in a projection (pressure correction) method, the standard
splitting algorithm for time integration of the incompressible fluid
equations. The paper proves an estimate on the  convergence of the filtered numerical
solution to the  corresponding DNS solution.
\end{abstract}

\section{Introduction}

A stabilization of a numerical time-integration algorithm for the
incompressible Navier-Stokes equations
\begin{equation}
\left.
\begin{array}{rcl}
u_t +(u \cdot \nabla) u - \nu \Delta u + \nabla p & = & f \\
\operatorname{ div} u & = & 0%
\end{array}
\right. \quad \text{in}~~\Omega \times (0,T]~,  \label{NS}
\end{equation}
for large Reynolds numbers with the help of an additional filtering step was
recently introduced in \cite{LRT10}. Denote by $w^{n}$ or $u^n$
approximations to the Navier-Stokes system velocity solution at time $t_n$,
and similarly $p^n$ approximates pressure $p(t_n)$. Let $\triangle t =t_{n+1}-t_n
$. The algorithm, referred to further as (A1),  reads: For $n=0,1,\dots$ and $u^0=u(t^0)$

\begin{description}
\item {1.} compute intermediate velocity $w^{n+1}$ from
\begin{equation*}
\left\{
\begin{split}
\frac1{\triangle t}( w^{n+1}- u^n) + ( w^{n+1}\cdot \nabla) w^{n+1} + \nabla
p^{n+1} - \nu\Delta w^{n+1} & = f^{n+1}, \\
\operatorname{ div} w^{n+1} & = 0,
\end{split}
\right.
\end{equation*}
subject to appropriate boundary conditions;

\item {2.} filter the intermediate velocity, $\overline{ w^{n+1}}:= F\,
w^{n+1}$;

\item {3.} relax $u^{n+1}:= (1-\chi) w^{n+1}+\chi\overline{ w^{n+1}}$, with
a relaxation parameter $\chi\in[0,1]$.
\end{description}

Here $F$ is a generic nonlinear filter acting from $L^{2}(\Omega )^{3}$ to $%
H^{1}(\Omega )^{3}$. We shall consider further in the paper several examples
of differential filters. The convergence of the finite element solutions of
(A1) to the smooth Navier-Stokes solution has been analyzed in \cite{LRT10}. One advantage of
the approach is the convenience of  implementation  within an existing CFD code
for laminar flows and flexibility in the choice of a filter.
Numerical results from \cite{BRTT,ELN09,LRT10,LRT11a,LayTak} with composite nonlinear differential filters, as defined in Section~\ref{sec_diff}, consistently show more
precise localization of model viscosity and its more precise correlation with the action
of nonlinearity on the smallest resolved scales than plain Smagorinsky type LES or VMS methods.
Thus we deem the approach deserves further study, should be put into perspective and related to developing LES models.

In this paper, we show that introducing the filter stabilization is closely
related (and even equivalent in a sense which is made precise further in the
paper) to adapting a certain eddy-viscosity model for LES. The
connection to a LES model helps us to quantify the model dissipation
introduced by the filter stabilization (Theorem~\ref{Th1}), formulate stability criteria (see \eqref{stab_cond2} and \eqref{stab_cond}), and gives
 insight into the choice of the filter and the relaxation parameter.
In particular, it provides an explanation why the stabilization by
the filtering avoids adding excessive model viscosity in regions
of larger velocity gradients, unlike most other eddy viscosity models.

The entire approach is specifically designed for treating higher
Reynolds number flows. Therefore, it is natural to extend it to the Chorin-Temam-Yanenko
type splitting algorithms, which are the prevailing method for
the time-integration of the incompressible Navier-Stokes equations for
fast unsteady flows. Such (rather natural) extension is presented in the paper
together with the relevant error analysis.
We note right away that the analysis demonstrates the convergence of
numerical solutions to the Navier-Stokes
smooth solution, while it would be also interesting to  analyze the error of the
numerical solutions to a (presumably smoother) solution of the corresponding LES model.
However, the specific difficulty we faced in the latter case is the lacking of the monotone
property by most of  eddy viscosity indicator functionals, which were numerically proved to
be useful in defining the filter $F$, see Section~\ref{sec_diff}.
Though practically attractive, introducing such functionals makes the mathematical well-posedness
of the LES model and accordingly the error analysis hard to accomplish and  we are unaware
of relevant results in this direction.


\section{Filter stabilization and LES model}

\label{sec2}

It is well known, see, e.g., \cite{Boyd} or \cite{MLF03}, that explicit filtering is related to
adding eddy or artificial viscosity.
The connection of the filter stabilization as defined above to LES modeling is easily recovered by
noting that shifting the index $n+1\rightarrow n$ on steps 2 and 3 and using
step 1 gives the implicit discretization of the Navier-Stokes equations, with explicitly treated
nonlinear dissipation term:
\begin{equation}
\left\{
\begin{split}
\frac{1}{\triangle t}(w^{n+1}-w^{n})+(w^{n+1}\cdot \nabla )w^{n+1}+\nabla
p^{n+1}-\nu \Delta w^{n+1}+\frac{\chi }{\triangle t}G  w^{n}& =f^{n+1},
\\
\Div w^{n+1}& ={0},
\end{split}%
\right.   \label{NS1}
\end{equation}%
with
\begin{equation*}
G:=I-F,\qquad \text{$I$ is the identity operator}.
\end{equation*}%
Assume $\chi =\chi _{0}\triangle t$, where $\chi _{0}$ is a time- and
mesh-independent constant, then \eqref{NS1} can be treated as the
time-stepping scheme for
\begin{equation}
\left\{
\begin{split}
w_{t}+(w\cdot \nabla )w+\nabla p-\nu \Delta w+\chi _{0}G\,w& =f , \\
\Div w& ={0}.
\end{split}%
\right.   \label{NS1cont}
\end{equation}%
These arguments show that the  numerical integrator (A1) with filter stabilization is
the splitting scheme for solving \eqref{NS1cont}. Furthermore,
\eqref{NS1cont} can be observed as a LES model, with $\chi _{0}G\,w$
corresponding to the Reynolds stress tensor closure:
\begin{equation*}
\nabla \cdot (\overline{w\otimes w}-\overline{w}\otimes \overline{w})\approx
\chi _{0}G\,w.
\end{equation*}%
This simple observation leads to a refined analysis and better interpretation of
the numerical results and the method properties.

We note that  $\chi =O(\triangle t)$ is exactly the scaling of relaxation parameter which
allows us to prove optimal convergence result for a time-stepping splitting method (Theorem~\ref{ThMain}). Furthermore,
numerical experiments in \cite{ELN09,Fischer}
suggested that $\chi =O(\triangle t)$ is indeed the right scaling of the
relaxation parameter with respect to numerical solution accuracy.

We start by showing several numerical properties of the approach.
Throughout the paper we use $(\cdot,\cdot)$ and $\|\cdot\|$ to denote the $%
L^2$ scalar product and the norm, respectively. For the sake of analysis,
assume the homogeneous Dirichlet boundary conditions for velocity. Taking
the $L^2$ scalar product of \eqref{NS1} with $2\triangle t w^{n+1}$ and
integrating by parts gives
\begin{equation}  \label{rel1}
\| w^{n+1}\|^2-\| w^{n}\|^2+\frac12\| w^{n+1}- w^{n}\|^2 + \nu\triangle
t\|\nabla w^{n+1}\|^2+ \chi( G w^{n}, w^{n+1}) = \triangle t ( f^{n+1}, w^{n+1}).
\end{equation}
For a self-adjoint filtering operator, i.e. $(Gu,v)=(Gv,u)$ for any $u,v\in
H^1_0(\Omega)^3$, the equality \eqref{rel1} can be alternatively written as
\begin{multline}  \label{rel1c}
\| w^{n+1}\|^2-\| w^{n}\|^2 + \nu\triangle t\|\nabla w^{n+1}\|^2+
\frac\chi2\left(( G w^{n+1}, w^{n+1})+( G w^{n}, w^{n})\right) \\
= \triangle t ( f, w^{n+1})+\frac12\left(\chi( G (w^{n+1}- w^{n}),w^{n+1}- w^{n}) -\|
w^{n+1}- w^{n}\|^2\right).
\end{multline}
Considering the last two terms on the right-hand side, we immediately get
the sufficient condition of the energy stability of \eqref{NS1} for the case
of self-adjoint filters:
\begin{equation}  \label{stab_cond2}
\chi(G u, u)\le\|u\|^2\quad \forall ~ u\in H^1_0(\Omega)^3.
\end{equation}
If $G$ is not necessarily  self-adjoint, one may rewrite \eqref{rel1} as
\begin{multline*}
\| w^{n+1}\|^2-\| w^{n}\|^2+\frac12\| w^{n+1}- w^{n}\|^2 + \nu\triangle
t\|\nabla w^{n+1}\|^2+ \chi( G w^{n}, w^{n})
= \triangle t ( f, w^{n+1})+\chi( G w^{n}, w^{n}- w^{n+1}).
\end{multline*}
Thanks to the Cauchy inequality one gets for any $\theta\in \mathbb{R}$:
\begin{multline}  \label{rel2}
\| w^{n+1}\|^2-\| w^{n}\|^2 + \nu\triangle t\|\nabla w^{n+1}\|^2+
(1-\theta)\chi (G w^{n}, w^{n}) \\
\le \triangle t( f, w^{n+1})-\chi\Big(\theta(G w^{n}, w^{n})- \frac\chi2( G
w^{n}, G w^{n})\Big).
\end{multline}
In this more general case, one may consider the following sufficient
condition for the energy stability. Fixing, for example, $\theta=\frac12$,
assures the sum of the last two terms in \eqref{rel2} is positive if
\begin{equation}  \label{stab_cond}
\chi(G u, G u)\le( Gu, u)\quad \forall ~u\in H^1_0(\Omega)^3.
\end{equation}

Assume $G$ is self-adjoint and $w^n$ approximates a smooth in time
Navier-Stokes solution, then \eqref{rel1c} leads to the following energy
balance relation of the numerical method:
\begin{equation*}
\| w^{N}\|^2+ \nu\sum_{n=1}^N\triangle t\|\nabla w^{n}\|^2+
\chi_0\sum_{n=1}^N \triangle t(G w^{n}, w^{n}) = \| w^{0}\|^2+\sum_{n=1}^N
\triangle t(f^{n}, w^{n}) + O(\triangle t).
\end{equation*}
In particular, we may conclude that the filter stabilization introduces the \textit{%
model dissipation} of
\begin{equation}  \label{model_diff}
\chi_0\sum_{n=1}^N \triangle t(G w^{n}, w^{n}).
\end{equation}

Finally, we notice that the filtering and relaxation steps in (A1) can be rearranged as
\[
\frac{u^{n+1}-w^{n+1}}{\triangle t} =-\chi_0G\,w^{n+1},
\]
which is the explicit Euler method for integrating
\begin{equation}\label{expltFilter}
u_t=-\chi_0G\,u~~\mbox{on}~[t_n,t_{n+1}],~~\text{with}~u(t_n)=w(t_{n+1}).
\end{equation}
The coupling of a DNS method with the evolution equation  \eqref{expltFilter} is known as another way of introducing explicit filtering
in modelling of dynamical systems, e.g. \cite{Boyd}. This suggests that an improvement leading
to higher order methods for integrating \eqref{expltFilter} might be possible.

In the next section, we shall study properties of the operator $G$ for a class of nonlinear differential filters.


\section{Nonlinear Differential Filters}\label{sec_diff}

Linear differential filters have a long history in LES, see \cite{Fischer2}.   We also point to \cite{Holm} and  references therein for applications of linear differential filters in the Lagrange-averaging turbulence models.
In this section, we consider a family of \textit{nonlinear} differential filters for the filtering procedure. Some conclusions will be drawn
concerning the stability conditions \eqref{stab_cond2}, \eqref{stab_cond}
and equivalence to other approaches in the LES modelling. We use the
following notation:
\begin{equation*}
V:=\left\{ v\in H_{0}^{1}(\Omega)^{3}\,:\,\Div v=0\right\} ,\quad
H=\left\{ v\in \;L^{2}(\Omega )^{3}:\Div v=0, v\cdot n|_{\dO}=0\right\} .
\end{equation*}%
By $\mathbb{P}$ we denote the $L^{2}$ orthogonal projector from $%
L^{2}(\Omega )^{3}$ onto $H$.

For a given sufficiently smooth vector function $u$ and $w\in L^2(\Omega)^3$ we define $F\, w$ as the
solution to
\begin{equation}  \label{diff_filter}
(\delta^2 a(u)\nabla(F\, w),\nabla v)+(F\, w, v)=( w, v)\quad \forall v\in X,
\end{equation}
with an indicator functional $0\le a(u)\le 1$ and filtering radius $\delta^2$,
which generally may depend on $x$ and $t$, $\delta_{\max}=\max_{x,t}|\delta|$. Here $X= H^1_0(\Omega)^3$ or
$X= V$, if the filter is div-free preserving.
We note that it is not immediately clear if the problem \eqref{diff_filter}
is well-posed. In practice, this is not an issue, since in a finite
dimension setting, e.g. for a finite element method, the bilinear form from
the left-hand side of \eqref{diff_filter} is elliptic and thus \eqref{diff_filter} is
well-posed. Otherwise, we may assume $0<\varepsilon\le a(u)\le 1$ for some
sufficiently small positive $\varepsilon$. If we assume this, none of our results
further in the paper depend on the parameter $\varepsilon$.  It is standard
to base the indicator functional on the input function $w$ itself, that is $%
u=w$ and we will denote $\overline{w}:= F\, w$ in this case. However, in the
course of analysis we need to consider (auxiliary) filtering with $u\neq w$.
If we need to show explicitly the function used for the indicator, we shall
write $F(u)  w$ instead of $F\, w$ or  $F(w)  w$ instead of $%
\overline{w}$.

The action of $G=I-F$, $w_g:=G\, w$, is defined formally as the solution to
\begin{equation}  \label{G_filter}
( \delta^2 a(u) \nabla w_g,\nabla v)+( w_g, v)=( \delta^2 a(u)\nabla
w,\nabla v)\quad \forall v\in X.
\end{equation}

The operator $G$ is self-adjoint on $X$ and in the operator notation it can
be written as
\begin{equation}  \label{G_operator}
G= -\left[I-\Delta_a\right]^{-1}\Delta_a,
\end{equation}
with
\begin{equation*}
\Delta_a:= \left\{%
\begin{split}
\operatorname{ div}(\delta^2 a(u)\nabla)&\quad \text{if}~ X= H^1_0(\Omega)^3, \\
\mathbb{P}\operatorname{ div}(\delta^2 a(u)\nabla) &\quad \text{if}~ X= V.
\end{split}
\right.
\end{equation*}

Since operator $\Delta_a$ is self-adjoint   and positive definite, one see from %
\eqref{G_operator} that $G\le I$ and thus the sufficient stability condition %
\eqref{stab_cond2} holds for any $\chi\in[0,1]$. This can be easily verified
in a formal way by substituting $v= F\, w$ in \eqref{diff_filter} to get $%
(w,F\, w)\ge0$ and thus $(w, G w)=(w,w-F\, w)\le\|w\|^2$ for any $w\in H_{0}^{1}(\Omega)^{3}$. Moreover, varying $%
\theta$ in \eqref{rel2} and using \eqref{stab_cond}, one  shows
the energy stability estimate for any $\chi\in[0,2]$. However, such
refinement is not important for our further analysis. \medskip

With the help of \eqref{model_diff} and \eqref{G_operator}, we now quantify the model dissipation introduced by the differential filters. To make notation
shorter and without loss of generality, let $\chi=\chi_0 \triangle t$.

First, representation \eqref{G_operator} immediately implies $G\le -\Delta_a$.
Thus the additional dissipation introduced by the differential filtering
does not exceed those introduced by the LES closure model:
\begin{equation}  \label{closure}
\Div(\overline{ w\otimes w}-\overline{ w}\otimes\overline{ w})\approx
-\chi_0\Delta_a w.
\end{equation}
It is easy to show that for a discrete case and if the condition
\begin{equation*}
\delta\lesssim~ \text{spatial mesh width}
\end{equation*}
holds and $0\le a(u)\le 1$, then the dissipation introduced by the differential filtering %
\eqref{diff_filter} is \textit{equivalent} to the dissipation of the closure
model \eqref{closure}.

We make the above statement more precise for a finite element
discretization. To this end, assume a consistent triangulation $\mathcal{T}$
of $\Omega$, satisfying the minimal angle condition
\begin{equation*}
\inf_{K\in\mathcal{T}}\rho(K)/r(K)=:\alpha_0>0
\end{equation*}
where $\rho(K)$ and $r(K)$ are the diameters of inscribed and superscribed
circles (spheres in 3D) for a triangle (tetrahedron) $K$. We have the
following result.

\begin{theorem}
\label{Th1}  Assume $X$ is the finite element space of continuous functions
which are polynomials of degree $p\ge1$ on every element $K$ and $\max_{x\in
K}|\delta(x)|\le C_\delta\,r(K)$ for any $K\in\mathcal{T}$, with a constant $%
C_\delta$ independent of $K$. Then for any $w\in X$ the equivalence
\begin{equation}  \label{FEequive}
\widetilde{c}\,(\delta^2 a(u)\nabla w,\nabla w)\le (G\,w,w) \le (\delta^2
a(u)\nabla w,\nabla w)
\end{equation}
holds with a constant $\widetilde{c}>0$ independent of $w$, the indicator $%
a(\cdot)$, and the filtering radius $\delta$. The constant $\widetilde{c}>0$
may depend on $p$, $C_\delta$, and $\alpha_0$.
\end{theorem}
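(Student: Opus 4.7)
The plan is to reduce everything to the variational characterization of $F$ and $G$, derive a clean energy identity for $(Gw,w)$, and then use the finite element inverse inequality to close the lower bound. Write $\langle u,v\rangle_a:=(\delta^2 a(u)\nabla u,\nabla v)$ for the weighted (semi)bilinear form, so the filter equation becomes $\langle Fw,v\rangle_a+(Fw,v)=(w,v)$ for all $v\in X$, and correspondingly $\langle Gw,v\rangle_a+(Gw,v)=\langle w,v\rangle_a$.

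First I would derive the identity
\begin{equation*}
(Gw,w)=\langle Fw,Fw\rangle_a+\|Gw\|^2.
\end{equation*}
To do this, test the filter equation with $v=w$ to get $(Fw,w)=\|w\|^2-\langle Fw,w\rangle_a$, so $(Gw,w)=\langle Fw,w\rangle_a$. Then test with $v=Gw$ and use $w=Fw+Gw$ to deduce $\langle Fw,Gw\rangle_a=\|Gw\|^2$; combined with $\langle Fw,w\rangle_a=\langle Fw,Fw\rangle_a+\langle Fw,Gw\rangle_a$ this gives the identity. The upper bound in \eqref{FEequive} is then immediate: testing the $Gw$-equation with $v=Gw$ yields $\langle w,Gw\rangle_a=\langle Gw,Gw\rangle_a+\|Gw\|^2\geq 0$, hence $(Gw,w)=\langle w,w\rangle_a-\langle w,Gw\rangle_a\leq \langle w,w\rangle_a$.

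The lower bound is the main obstacle and is where the finite element hypothesis enters. Note that $Gw=w-Fw\in X$, so I can apply the standard local inverse inequality for polynomials of degree $\leq p$ on a shape-regular mesh: on each $K\in\mathcal{T}$,
\begin{equation*}
\|\nabla Gw\|_{L^2(K)}^2\leq C_{\mathrm{inv}}(p)\,\rho(K)^{-2}\|Gw\|_{L^2(K)}^2.
\end{equation*}
Using $\|\delta\|_{L^\infty(K)}\leq C_\delta\,r(K)$, the bound $0\leq a(u)\leq 1$, and shape regularity $r(K)/\rho(K)\leq 1/\alpha_0$, summing over $K$ gives
\begin{equation*}
\langle Gw,Gw\rangle_a \leq C_2 \|Gw\|^2,\qquad C_2:=C_\delta^2 C_{\mathrm{inv}}/\alpha_0^2,
\end{equation*}
where $C_2$ depends only on $p$, $C_\delta$, and $\alpha_0$.

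Finally I would combine this with the triangle inequality for the weighted seminorm,
\begin{equation*}
\langle w,w\rangle_a \leq 2\langle Fw,Fw\rangle_a + 2\langle Gw,Gw\rangle_a \leq 2\langle Fw,Fw\rangle_a + 2C_2\|Gw\|^2 \leq 2\max(1,C_2)\,(Gw,w),
\end{equation*}
where the last step uses the energy identity from step one. This yields the lower bound with $\widetilde{c}=1/(2\max(1,C_2))$, which depends only on $p$, $C_\delta$, and $\alpha_0$ as claimed. The delicate point — and the only one that genuinely needs the finite dimensional structure — is the control of $\langle Gw,Gw\rangle_a$ by $\|Gw\|^2$; without the inverse estimate $G$ would be smoothing on the scale $\delta$ and no such bound could hold.
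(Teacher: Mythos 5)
Your proof is correct and takes essentially the same route as the paper's: both hinge on the energy identity $(Gw,w)=\|Gw\|^2+(\delta^2 a(u)\nabla Fw,\nabla Fw)$ (obtained in the paper by testing \eqref{G_filter} with $v=Gw$ and $v=-w$) together with the finite element inverse inequality applied to $Gw\in X$, and your splitting $(\delta^2 a(u)\nabla w,\nabla w)\le 2(\delta^2 a(u)\nabla Fw,\nabla Fw)+2(\delta^2 a(u)\nabla Gw,\nabla Gw)$ is only a cosmetic repackaging of the paper's test with $v=w$ followed by Young's inequality, yielding a slightly different but equally admissible constant. One small notational caution: in your definition $\langle u,v\rangle_a:=(\delta^2 a(u)\nabla u,\nabla v)$ the symbol $u$ doubles as the fixed argument of the indicator and as a generic slot, so the weight must be read as the fixed $a(u)$ throughout, e.g.\ $\langle Fw,Fw\rangle_a=(\delta^2 a(u)\nabla Fw,\nabla Fw)$ and not $a(Fw)$.
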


\begin{proof}
Consider the finite element inverse inequality
\begin{equation}\label{inverse}
\|\nabla w\|_{L^2(K)}\le c_0\rho(K)^{-1} \| w\|_{L^2(K)}\qquad\forall\,w\in X,
\end{equation}
where the constant $c_0$ depends only on the polynomial degree $p$ and $\alpha_0$. The inequality \eqref{inverse}, the assumption on $\delta$ and the minimal angle condition imply
\begin{equation}\label{aux21}
\|\delta\nabla w\|_{L^2(K)}\le \widetilde{C} \| w\|_{L^2(K)},
\end{equation}
where the constant $\widetilde{C}$ depends only on $p$, $C_\delta$, and $\alpha_0$. Squaring \eqref{aux21},  summing over all $K\in\mathcal{T}$, and recalling that $a(\cdot)\le1$, implies
\begin{equation}\label{aux11}
(\delta^2a(u)\nabla w,\nabla w)\le \widetilde{C}^2 \| w\|^2.
\end{equation}
Denote $w_g=G\,w$ for some $w\in X$. We set $v=w_g$ and $v=-w$ in  \eqref{G_filter} and sum up the equalities
to get
\begin{align*}
0&=( \delta^2 a(u) \nabla w_g,\nabla w_g)+( w_g, w_g)-2( \delta^2 a(u)\nabla w,\nabla w_g)
-( w_g, w)+( \delta^2 a(u)\nabla w,\nabla w)\\ &=\|w_g\|^2-( w_g, w)+( \delta^2 a(u)\nabla (w-w_g),\nabla(w-w_g)).
\end{align*}
Thus, it holds $\|w_g\|^2\le( w_g, w)$, i.e. the condition \eqref{stab_cond}. Now we set $v=w$ in  \eqref{G_filter}
and use \eqref{stab_cond} and \eqref{aux11} to estimate
\begin{align*}
( \delta^2 a(u) \nabla w,\nabla w)& =( \delta^2 a(u) \nabla w_g,\nabla w)+( w_g, w)\\
&\le\frac12( \delta^2 a(u) \nabla w_g,\nabla w_g)+\frac12( \delta^2 a(u) \nabla w,\nabla w)+( w_g, w)\\
&\le\frac12\widetilde{C}^2 \| w_g\|^2+\frac12( \delta^2 a(u) \nabla w,\nabla w)+( w_g, w)\\
&\le(\frac12\widetilde{C}^2+1)( w_g, w)+\frac12( \delta^2 a(u) \nabla w,\nabla w).
\end{align*}
We proved the lower bound in \eqref{FEequive}.

To show the upper bound we set $v=w_g$ and $v=w$ in  \eqref{G_filter} and sum up the equalities
to get
\[
0=( \delta^2 a(u) \nabla w_g,\nabla w_g)+( w_g, w_g)
+( w_g, w)-( \delta^2 a(u)\nabla w,\nabla w).
\]
This yields the upper bound  in \eqref{FEequive}: $( w_g, w)\le( \delta^2 a(u)\nabla w,\nabla w)$.

\end{proof}

Few conclusions can be drawn from the equivalence result \eqref{FEequive}
concerning the relation of the filter stabilization to some other eddy-viscosity models.

The use of the linear differential filter ($a\equiv 1$), as considered in \cite{ELN09},
is equivalent to the method of artificial viscosity. This means that the
model dissipation is equivalent to the isotropic diffusion scaled with $\chi
_{0}\delta ^{2}$. Given what is known about the method of artificial
viscosity, it is not surprising  that the method is not very accurate in this
case. Thus, more elaborated indicator functionals should be used.
Generally, we may think of $a(u)$ as a real valued  functional, depending on $%
u,\nabla u$, and  selected with the intent that
\begin{align*}
a(u(x))& \approx 0\quad \text{for\  laminar\  regions\  or\  persistent\
flow\  structures}, \\
a(u(x))& \approx 1\quad \text{for\  flow\  structures\  which\  decay\
rapidly}.
\end{align*}

The choice of the Smagorinsky type indicator function, $a(u)=|\nabla u|$,
does not necessarily satisfy the condition $a(u)\leq 1$. In this case, we
do not have the equivalence result of the filter stabilization to the Smagorinsky
LES model. Only the upper bound in \eqref{FEequive} is guaranteed to hold.
Thus the dissipation introduced by the filtering with $a(u)=|\nabla u|$
is likely \textit{less} than that of the Smagorinsky model. This can be a
desirable property, since the Smagorinsky LES model is known to be severely
over-diffusive for certain flows, e.g. \cite{Sagaut}, and several ad hoc corrections were introduced
such as van Driest damping, dynamic models, and others, see  \cite{Driest,Germano,Piomelli}.

Several reasonable indicator functions $a(u)$ are known to satisfy the
boundedness condition: $0\le a(u)\le1$. These are the re-normalized Smagorinsky
type indicator~\cite{BIR09}, the indicator based on the $Q$-criteria~\cite%
{WHM} and the Vreman indicators  \cite{Vreman}; also an indicator based on
the normalized helical density distribution was considered in \cite{BRTT}. Given
several indicators $a_i(\cdot)$, $i=1,\dots,N$, the combined indicator can
be defined as the geometric mean: $a(\cdot):=\left(\prod\limits_{i=1}^Na_i(%
\cdot)\right)^{\frac1N}$.

We remark, that the convergence results proved further in this paper do \textit{not}
rely on any smoothness properties or particular form of $a(\cdot)$.

The last remark in this section is that Theorem~\ref{Th1} does not give much
insight if enforcing the divergence constraint in the filter is important or
not. However, if we assume $X=V$ in \eqref{diff_filter}, i.e., the filtered
velocity satisfies the divergence free condition, then this slightly simplifies
the error analysis in Section~\ref{sec_est}.

\section{Projection scheme with filter stabilization}

One idea behind introducing the filter stabilization or explicit filtering
was to provide CFD software users and developers with a simple way to enhance existing codes for laminar
incompressible flows to compute high Reynolds number flows. This goal is
accomplished by making the filtering procedure algorithmically independent
of a time integration method. Driven by this intention, we consider the
Chorin \cite{Chorin:68} splitting (projection) scheme with the additional
separate filtering step. Projection methods are the common numerical approach to
the incompressible Navier-Stokes equations and form a family of splitting
algorithms, cf. \cite{GMS,Prohl}. We perform the numerical analysis for the
simplest first order method given below. From the algorithmic standpoint,
the generalization to higher order projection methods is straightforward,
although analysis may become considerably more involved.

Projection methods split the time evolution of the velocity vector field
according to the momentum equation and the projection of the velocity to
satisfy the divergence-free condition. The filtering step can be introduced
before or after the projection step. In the former case, it is not necessary to augment
the filter with the div-free constraint, since the projection step takes
care of the keeping the approximates in the subspace of div-free functions.
If the filter is div-free preserving, then it is reasonable to put it after the projection.
In this paper we consider the constrained filter.
We shall study the following algorithm:

\begin{description}
\item {Step 1:} Solve the convection-diffusion type problem: Given $u^n$, $%
w^{\ast}$, find $\widetilde {w^{n+1}}$:
\begin{equation}  \label{step1}
\left\{
\begin{split}
\frac1{\triangle t}( \widetilde {w^{n+1}}- u^n) + ( w^{\ast}\cdot \nabla)
\widetilde {w^{n+1}} - \nu\Delta \widetilde {w^{n+1}} & = f^{n+1}, \\
\widetilde {w^{n+1}}|_{\partial\Omega }&= 0.
\end{split}
\right.
\end{equation}
The velocity $w^{\ast}$ is typically an interpolation from previous times,
e.g. $w^{\ast}:= w^{n}$ or higher order interpolation. For the sake of
analysis we consider $w^{\ast}= w^{n}$.

\item {Step 2:} Project $\widetilde {w^{n+1}}$ on the div-free subspace: Find $%
p^{n+1}$ and $w^{n+1}$ solving the Neumann pressure Poisson problem:
\begin{equation}  \label{step2}
\left\{
\begin{split}
\frac1{\triangle t}( w^{n+1}- \widetilde {w^{n+1}}) + \nabla p^{n+1} & = 0, \\
\operatorname{ div} w^{n+1} & = {0}, \\
n\cdot w^{n+1}|_{\partial\Omega }&= 0.
\end{split}
\right.
\end{equation}

\item {Step 3:} Filter: $\overline{ w^{n+1}}:= F\, w^{n+1}$;\newline

\item {Step 4:} Relax:
\begin{equation}  \label{step4}
u^{n+1}:= (1-\chi) w^{n+1}+\chi\overline{ w^{n+1}},
\end{equation}
with some $\chi\in[0,1]$.
\end{description}

Similar to what was shown in section~\ref{sec2}, shifting the index $n+1\to n
$ on steps 2--4 and substituting into \eqref{step1} gives for $%
\chi=\chi_0\triangle t$

\begin{equation}  \label{NS1pr}
\left\{
\begin{split}
\frac1{\triangle t}( \widetilde {w^{n+1}}- \widetilde {w^{n}}) + ( w^{\ast}\cdot
\nabla) \widetilde {w^{n+1}} + \nabla p^{n+1} - \nu\Delta \widetilde {w^{n+1}} +
\chi_0 G   \widetilde {w^{n}} - \triangle t \chi_0 G  \nabla p^{n+1}&
= f^{n+1}, \\
\operatorname{ div} \widetilde {w^{n+1}} -\triangle t \Delta p^{n+1}& = {0}.
\end{split}
\right.
\end{equation}

From \eqref{NS1pr} we see that the splitting scheme \eqref{step1}--%
\eqref{step4} is formally  the first order accurate time-discretization of the LES
model \eqref{NS1cont}.\smallskip

Further, we show that the splitting scheme \eqref{step1}--\eqref{step4} is
stable. There are two well-known approaches to accomplish the error analysis
of projection methods. The one of Rannacher and Prohl \cite{Prohl}, \cite%
{Rannacher} uses the relation between projection and quasi-compressibility
methods as it is seen from \eqref{NS1pr}. However, this analysis needs
considerable effort to get extended to equations different from the plain
Navier-Stokes equations. Another framework is mainly due to Shen (see \cite%
{JS,Shen2}), where convergence results were shown based on energy type estimates.
In our error analysis we follow (to a certain extent) arguments
from these two papers.

\section{Stability}

To show the stability of the splitting scheme, we need the following simple
auxiliary result:

\begin{lemma}
\label{lem_aux} For $w^{n+1}$ and $u^{n+1}$ from the algorithm~\eqref{step1}%
--\eqref{step4} and the filter $F$ defined in \eqref{diff_filter}, it holds
\begin{equation*}
\|w^{n+1}\|\ge\|u^{n+1}\|.
\end{equation*}
\end{lemma}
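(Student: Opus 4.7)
The plan is to use the convex combination structure of the relaxation step together with the non-expansivity of the filter $F$ in the $L^2$ norm. Writing $u^{n+1} = (1-\chi) w^{n+1} + \chi \overline{w^{n+1}}$ with $\chi \in [0,1]$, the triangle inequality immediately gives
\[
\|u^{n+1}\| \le (1-\chi)\|w^{n+1}\| + \chi \|\overline{w^{n+1}}\|,
\]
so it suffices to prove that $\|\overline{w^{n+1}}\| \le \|w^{n+1}\|$, i.e. that the differential filter $F$ is a contraction on $L^2$.

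To show this, I would test the filter equation \eqref{diff_filter} (with $w = w^{n+1}$ and $u$ whatever was used to form the indicator) against the admissible choice $v = \overline{w^{n+1}} = F w^{n+1}$. Since $w^{n+1}$ is div-free (from Step 2), $\overline{w^{n+1}} \in V$ is also a legal test function whether $X = V$ or $X = H_0^1(\Omega)^3$. This yields
\[
(\delta^2 a(u)\nabla \overline{w^{n+1}}, \nabla \overline{w^{n+1}}) + \|\overline{w^{n+1}}\|^2 = (w^{n+1}, \overline{w^{n+1}}).
\]
Dropping the non-negative gradient term (here we use $a(u) \ge 0$) and applying Cauchy--Schwarz to the right-hand side gives $\|\overline{w^{n+1}}\|^2 \le \|w^{n+1}\|\, \|\overline{w^{n+1}}\|$, hence $\|\overline{w^{n+1}}\| \le \|w^{n+1}\|$. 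Plugging this back into the triangle-inequality bound yields
\[
\|u^{n+1}\| \le (1-\chi)\|w^{n+1}\| + \chi \|w^{n+1}\| = \|w^{n+1}\|,
\]
which is the claim.

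There is no real obstacle: the only thing to be slightly careful about is that the test function $\overline{w^{n+1}}$ lies in the admissible space $X$, which is automatic from how $F$ is defined as a map into $X$, and that the quadratic form on the left-hand side is non-negative, which follows from $a(u) \ge 0$. An alternative route is to write $u^{n+1} = w^{n+1} - \chi G w^{n+1}$, expand $\|u^{n+1}\|^2$, and invoke the property $\|Gw\|^2 \le (Gw, w)$ established inside the proof of Theorem~\ref{Th1}, but the convex-combination argument above is shorter and avoids bootstrapping from a result in a separate proof.
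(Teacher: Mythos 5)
Your proof is correct and follows essentially the same route as the paper: test the filter equation \eqref{diff_filter} with $v=\overline{w^{n+1}}$ to get the $L^2$ non-expansivity of $F$, then apply the triangle inequality to the convex combination in Step~4. The only cosmetic difference is that you conclude $\|\overline{w^{n+1}}\|\le\|w^{n+1}\|$ via Cauchy--Schwarz where the paper uses the polarization identity to obtain an exact energy identity first.
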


\begin{proof} From the definition \eqref{diff_filter} we obtain:
\begin{equation*}
(\delta^{2}a(w^{n+1})\nabla\overline{w^{n+1}},\nabla\overline{w^{n+1}})+\|\overline{w^{n+1}}\|^{2}=(w^{n+1},\overline{w^{n+1}})
=\frac{1}{2}(\|w^{n+1}\|^{2}+\|\overline{w^{n+1}}\|^{2}-\|w^{n+1}-\overline{w^{n+1}}\|^{2}).
\end{equation*}
This yields
\begin{equation*}
\|w^{n+1}\|^{2}= 2(\delta^{2}a(w^{n+1})\nabla\overline{w^{n+1}}, \nabla\overline{w^{n+1}})+\|\overline{w^{n+1}}\|^{2}+\|\overline{w^{n+1}}-w^{n+1}\|^{2}.\label{sta3}
\end{equation*}
Hence, $\|w^{n+1}\|\ge\|\overline{w^{n+1}}\|$.
From \eqref{step4}, we get
\begin{equation*}
\|u^{n+1}\|\le(1-\chi)\|w^{n+1}\|+\chi\|\overline{w^{n+1}}\|\le\|w^{n+1}\|\quad\text{for}~\chi\in[0,1].
\end{equation*}

\end{proof}

Denote by $\|\cdot\|_{-1}$ the $L^2$-dual norm for $H^1_0(\Omega)^3$. Now we are ready to prove the following  stability  result.

\begin{theorem}
\label{LemStab} The algorithm~\eqref{step1}--\eqref{step4} is stable in the
sense of the following a priori estimate:
\begin{equation}  \label{eqStab}
\|w^{l}\|^{2}+\sum_{n=0}^{l-1}\|w^{n+1}-\widetilde{w^{n+1}}%
\|^{2}+\sum_{n=0}^{l-1}\|\widetilde{w^{n+1}}-u^{n}\|^{2}+\sum_{n=0}^{l-1}\nu%
\triangle t\|\nabla\widetilde{w^{n+1}}\|^{2}\leq\|w^{0}\|^{2}+%
\sum_{n=0}^{l-1}\nu^{-1}\triangle t\|f(t_{n+1})\|_{-1}^{2}
\end{equation}
for any $l=1,2,\dots$.
\end{theorem}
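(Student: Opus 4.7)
The plan is to derive a per-step energy identity from Step 1 and Step 2, combine them so that the intermediate velocity $\widetilde{w^{n+1}}$ is eliminated from the telescoping, handle the right-hand side by duality, and finally invoke Lemma~\ref{lem_aux} to replace the $\|u^{n}\|$ that appears on the left by $\|w^{n}\|$ so that a clean telescoping sum over $n$ yields \eqref{eqStab}.

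First I would test \eqref{step1} with $2\triangle t\,\widetilde{w^{n+1}}\in H_0^1(\Omega)^3$. Since $w^{\ast}=w^{n}$ is divergence-free (being the outcome of the projection step at the previous time level) with vanishing normal component on $\partial\Omega$, the convection term $((w^{n}\cdot\nabla)\widetilde{w^{n+1}},\widetilde{w^{n+1}})$ vanishes by the usual antisymmetry argument. Using the polarization identity $2(a-b,a)=\|a\|^{2}-\|b\|^{2}+\|a-b\|^{2}$ on the time-derivative term and integrating the viscous term by parts, I obtain
\begin{equation*}
\|\widetilde{w^{n+1}}\|^{2}-\|u^{n}\|^{2}+\|\widetilde{w^{n+1}}-u^{n}\|^{2}+2\nu\triangle t\|\nabla\widetilde{w^{n+1}}\|^{2}=2\triangle t\,(f^{n+1},\widetilde{w^{n+1}}).
\end{equation*}
Next I would test \eqref{step2} with $2\triangle t\,w^{n+1}$. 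Because $\Div w^{n+1}=0$ in $\Omega$ and $n\cdot w^{n+1}=0$ on $\partial\Omega$, the pressure-gradient term vanishes after integration by parts, and the polarization identity again gives
\begin{equation*}
\|w^{n+1}\|^{2}-\|\widetilde{w^{n+1}}\|^{2}+\|w^{n+1}-\widetilde{w^{n+1}}\|^{2}=0.
\end{equation*}

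Adding these two identities eliminates $\|\widetilde{w^{n+1}}\|^{2}$ and yields
\begin{equation*}
\|w^{n+1}\|^{2}-\|u^{n}\|^{2}+\|w^{n+1}-\widetilde{w^{n+1}}\|^{2}+\|\widetilde{w^{n+1}}-u^{n}\|^{2}+2\nu\triangle t\|\nabla\widetilde{w^{n+1}}\|^{2}=2\triangle t\,(f^{n+1},\widetilde{w^{n+1}}).
\end{equation*}
I would bound the right-hand side by duality and Young's inequality,
\begin{equation*}
2\triangle t\,(f^{n+1},\widetilde{w^{n+1}})\le 2\triangle t\,\|f^{n+1}\|_{-1}\|\nabla\widetilde{w^{n+1}}\|\le\nu\triangle t\|\nabla\widetilde{w^{n+1}}\|^{2}+\nu^{-1}\triangle t\|f^{n+1}\|_{-1}^{2},
\end{equation*}
and absorb the gradient term into the left-hand side.

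The key step where the filter enters is the telescoping: the left-hand side contains $-\|u^{n}\|^{2}$, but I need $-\|w^{n}\|^{2}$ for the sum to collapse. Lemma~\ref{lem_aux} supplies exactly what is needed, namely $\|u^{n}\|\le\|w^{n}\|$, so $-\|u^{n}\|^{2}\ge -\|w^{n}\|^{2}$ and
\begin{equation*}
\|w^{n+1}\|^{2}-\|w^{n}\|^{2}+\|w^{n+1}-\widetilde{w^{n+1}}\|^{2}+\|\widetilde{w^{n+1}}-u^{n}\|^{2}+\nu\triangle t\|\nabla\widetilde{w^{n+1}}\|^{2}\le\nu^{-1}\triangle t\|f^{n+1}\|_{-1}^{2}.
\end{equation*}
Summing from $n=0$ to $l-1$ produces \eqref{eqStab}. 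The only nontrivial ingredient is Lemma~\ref{lem_aux}, which encapsulates the non-expansive character of the relaxation-plus-filter step in the $L^{2}$ norm; once it is in hand, the rest of the argument is the standard Chorin-projection energy bookkeeping, with the convection and pressure terms disappearing thanks to the divergence-free structure enforced by the previous projection step and the boundary conditions.
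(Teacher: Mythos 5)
Your proposal is correct and follows essentially the same route as the paper's proof: test \eqref{step1} with $2\triangle t\,\widetilde{w^{n+1}}$ and \eqref{step2} with $2\triangle t\,w^{n+1}$, combine the two identities to eliminate $\|\widetilde{w^{n+1}}\|^{2}$, bound the forcing term by duality and Young's inequality, invoke Lemma~\ref{lem_aux} to replace $\|u^{n}\|$ by $\|w^{n}\|$, and telescope. The only (immaterial) difference is that you apply Young's inequality after assembling the full identity rather than immediately after testing Step~1.
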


\begin{proof}

Take the $L^2$ scalar product of \eqref{step1} with $2\triangle t\widetilde{w^{n+1}}$:
\[
2(\widetilde{w^{n+1}}-u^{n},\widetilde{w^{n+1}})+2\nu\triangle t\|\nabla\widetilde{w^{n+1}}\|^{2}=2\triangle t( f^{n+1},\widetilde{w^{n+1}}) \leq \nu^{-1}\triangle t\|f^{n+1}\|_{-1}^{2}+\nu\triangle t\|\nabla\widetilde{w^{n+1}}\|^{2}.
\]
Rewriting and simplifying this leads to:
\begin{equation}
\|\widetilde{w^{n+1}}\|^{2}-\|u^{n}\|^{2}+\|\widetilde{w^{n+1}}-u^{n}\|^{2}+\nu\triangle t\|\nabla\widetilde{w^{n+1}}\|^{2}\leq \nu^{-1}\triangle t\|f^{n+1}\|_{-1}^{2}.\label{sta1}
\end{equation}
The $L^2$ scalar of \eqref{step2} with $2\triangle t\, w^{n+1}$ and $\Div\,w^{n+1}=0$ gives
\begin{equation*}
2(w^{n+1}-\widetilde{w^{n+1}},w^{n+1})=0\quad\Longrightarrow\quad\|w^{n+1}\|^{2}-\|\widetilde{w^{n+1}}\|^{2}+\|w^{n+1}-\widetilde{w^{n+1}}\|^{2}=0.
\end{equation*}
Substituting $\|\widetilde{w^{n+1}}\|^{2}$ with $\|w^{n+1}\|^{2}+\|w^{n+1}-\widetilde{w^{n+1}}\|^{2}$ in \eqref{sta1} yields
\begin{equation*}
\|w^{n+1}\|^{2}-\|u^{n}\|^{2}+\|w^{n+1}-\widetilde{w^{n+1}}\|^{2}+\|\widetilde{w^{n+1}}-u^{n}\|^{2}+\nu\triangle t\|\nabla\widetilde{w^{n+1}}\|^{2}\leq \nu^{-1}\triangle t\|f^{n+1}\|_{-1}^{2}.
\end{equation*}
The application of Lemma~\ref{lem_aux} gives
\begin{equation*}
(\|w^{n+1}\|^{2}-\|w^{n}\|^{2})+\|w^{n+1}-\widetilde{w^{n+1}}\|^{2}+\|\widetilde{w^{n+1}}-u^{n}\|^{2}+\nu\triangle t\|\nabla\widetilde{w^{n+1}}\|^{2}\leq \nu^{-1}\triangle t\|f^{n+1}\|_{-1}^{2}.
\end{equation*}
Summing up the inequality from $n=0,\dots, l-1$, we arrive at \eqref{eqStab}.
\end{proof}

\section{Error Estimates}

\label{sec_est}

We shall use $\langle \cdot,\cdot\rangle $ to denote the duality product
between $H^{-s}$ and $H_{0}^{s}(\Omega)$ for all $s\geq 0$. In the
following, we assume that the given data and solution to the equations %
\eqref{NS} subject to the homogeneous Dirichlet velocity boundary conditions
satisfy
\begin{equation}
\begin{cases}
u_0 \in (H^2(\Omega))^d \cap V, \\
{f} \in L^{\infty}(0,T;(L^2(\Omega))^d) \cap L^{2}(0,T;(H^1(\Omega))^d), \\
{f}_t \in L^2(0,T;H^{-1}), \\
\sup_{t\in [0,T]} \|\nabla u (t) \| \le \tilde{C}.%
\end{cases}
\label{A}
\end{equation}
We will use $c$ and $C$ as a generic positive constant which may depend on $%
\Omega,\nu,T$, constants from various Sobolev inequalities, $u_{0}$, ${f}$,
and the solution $u$ through the constant $\tilde{C}$ in \eqref{A}. \\[0.8em]
Under the assumption \eqref{A} one can prove the following inequalities, cf.
\cite{HR}:
\begin{eqnarray}
\sup_{t\in [0,T]} \{ \| u (t) \|_2 + \| u_t(t) \| + \| \nabla p(t) \| \} \le
C,  \label{en1} \\
\int_{0}^{T} \| \nabla u_t (t) \|^2 + t\| u_{tt}\|^2dt \le C,  \label{en2}
\end{eqnarray}
which will be used in the sequel. Further we often use the following
well-known \cite{Temam2} estimates for the bilinear form $b( u, {v}, {w}%
)=\int_{\Omega} ( u \cdot \nabla ) {v} \cdot {w}\, \mathrm{d} {x} $:
\begin{equation*}  
b( u, {v}, {w})\le \left\{%
\begin{array}{l}
c\| \nabla u\|\|\nabla {v}\|^{\frac12}\| {v}\|^{\frac12}\|\nabla {w}\|, \\
c\| u\|_2\| {v}\|\|\nabla {w}\|, \\
c\|\nabla u\|\| {v}\|_2\| {w}\|.%
\end{array}%
\right.
\end{equation*}
and $b( u, {v}, {w})=-b( u, {w}, {v})$ for $u\in H$.

Define the Stokes operator $A u = -\mathbb{P} \Delta u, \,\,\, \forall\, u \in
D(A)=V \cap H^{2}(\Omega)^{3} $. We will use the following
properties: $A$ is an unbounded positive self-adjoint closed operator in $H$
with domain $D(A)$, and its inverse $A^{-1}$ is compact in $H$ and satisfies
the following relations~\cite{JS,Shen2}:
\begin{equation*}
\exists\, c,C > 0,\,\,\, \text{such that} \,\,\, \forall u \in H :
\left\lbrace
\begin{array}{l}
\| A^{-1} u \|_2 \le c \| u \|~~\mbox{and}~~\| A^{-1} u \| \le c \| u
\|_{V^{\prime }}, \\[0.8em]
c\| u\|^{2}_{V^{\prime }} \le (A^{-1} u, u) \le C\| u \|^{2}_{V^{\prime }}.%
\end{array}
\right.  
\end{equation*}

Before we proceed with the error analysis, we prove several auxiliary
results given below in Lemma~\ref{lemma1}. The lemma gives estimates on the
difference between a velocity $w$ and the filtered velocity $F(u) w$.

\begin{lemma}
\label{lemma1} Consider the differential filter $F$ defined in %
\eqref{diff_filter} with some sufficiently smooth vector function $u$. For any $w\in V$ and $F w\in V$
it holds
\begin{align}
\|w- F {w}\|_{\phantom{V'}}&\leq \delta_{\mathrm{max}}\|\nabla w\|,
\label{estFilt1} \\
\|w- F {w}\|_{V^{\prime }}&\leq \delta^{2}_{\mathrm{max}}\|\nabla w\|.
\label{estFilt2}
\end{align}
\end{lemma}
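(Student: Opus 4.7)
The plan is to use the defining equation \eqref{G_filter} for $w_g := w - Fw = Gw$, namely
\[
(\delta^2 a(u)\nabla w_g, \nabla v) + (w_g, v) = (\delta^2 a(u)\nabla w, \nabla v), \quad \forall v \in V,
\]
together with the equivalent rearrangement $(w_g, v) = (\delta^2 a(u) \nabla Fw, \nabla v)$ obtained by subtracting one form of the filter equation from the other. Both inequalities should fall out of simple energy tests on this identity, combined only with $0 \le a(u) \le 1$ and $|\delta| \le \delta_{\max}$.

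For the $L^{2}$ bound \eqref{estFilt1}, I would test with $v = w_g$, factor $\delta^2 a(u) = (\delta\sqrt{a(u)})^{2}$, and use Cauchy--Schwarz together with Young's inequality on the right-hand side to absorb half of the weighted-gradient term into the left-hand side. This gives
\[
\tfrac{1}{2}\|\delta\sqrt{a(u)}\,\nabla w_g\|^{2} + \|w_g\|^{2} \le \tfrac{1}{2}\|\delta\sqrt{a(u)}\,\nabla w\|^{2} \le \tfrac{1}{2}\delta_{\max}^{2}\|\nabla w\|^{2},
\]
from which \eqref{estFilt1} follows at once. As a byproduct I obtain $\|\delta\sqrt{a(u)}\,\nabla w_g\| \le \|\delta\sqrt{a(u)}\,\nabla w\| \le \delta_{\max}\|\nabla w\|$, which is precisely the extra control needed for the $V'$-estimate.

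For the $V'$ bound \eqref{estFilt2}, I would realize the norm by duality, $\|w_g\|_{V'} = \sup_{v\in V,\ \|\nabla v\|=1}(w_g,v)$, and start from
\[
(w_g, v) = (\delta^{2} a(u)\nabla w, \nabla v) - (\delta^{2} a(u)\nabla w_g, \nabla v).
\]
Splitting each coefficient as $\delta^{2} a(u) = (\delta\sqrt{a(u)})(\delta\sqrt{a(u)})$ and applying Cauchy--Schwarz, the factor containing $v$ is bounded by $\delta_{\max}\|\nabla v\|$ (using $a(u)\le 1$), the factor with $w$ is bounded by $\delta_{\max}\|\nabla w\|$ directly, and the factor with $w_g$ by the same quantity $\delta_{\max}\|\nabla w\|$ via the byproduct from the first part. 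Collecting terms gives $|(w_g, v)| \lesssim \delta_{\max}^{2}\|\nabla w\|\,\|\nabla v\|$, hence \eqref{estFilt2}.

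There is no real obstacle here: the argument is essentially a double application of Cauchy--Schwarz on the filter's energy identity. The only point that requires a bit of care is the bookkeeping of the two $\delta$ factors and the two occurrences of $\sqrt{a(u)}$, so that the extra power of $\delta_{\max}$ in the $V'$-bound (as compared to the $L^{2}$-bound) is correctly harvested from the fact that the test function $v$ itself contributes a weighted gradient of size $\delta_{\max}\|\nabla v\|$, rather than a plain $\|\nabla v\|$.
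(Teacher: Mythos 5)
Your proof is correct and follows essentially the same route as the paper: the $L^2$ bound comes from testing the filter identity with $e=w-F w$ and absorbing the weighted-gradient term, and the $V'$ bound from pairing $e$ against a test function and harvesting one power of $\delta_{\max}$ from each slot of the weighted-gradient inner product, the paper merely realizing the duality through $v=A^{-1}(w-F w)$ rather than a supremum over $v\in V$, which is the same thing. The only point to tidy up is the constant in \eqref{estFilt2}: bounding $(\delta^{2}a(u)\nabla w,\nabla v)$ and $(\delta^{2}a(u)\nabla e,\nabla v)$ separately yields $2\,\delta_{\max}^{2}\|\nabla w\|\,\|\nabla v\|$ while the lemma claims constant $1$, so keep the two terms combined as $(\delta^{2}a(u)\nabla F w,\nabla v)$ and use $\|\delta\sqrt{a(u)}\nabla F w\|\le\|\delta\sqrt{a(u)}\nabla w\|$ (obtained by testing \eqref{diff_filter} with $v=F w-w$, as in the paper's inequality \eqref{fne2}) to recover it.
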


\begin{proof}
Denote $e=w- F {w}$. The  equation \eqref{diff_filter} gives
\begin{equation*}
(\delta^{2}a(u)\nabla e,\nabla
v)+(e,v)=(\delta^{2}a(u)\nabla w, \nabla v)\;
\forall\;v\in\;V.
\end{equation*}
Letting $v=e$ yields
\begin{align*}
\|\delta\sqrt{a(u)}\nabla e\|^{2}+\|e\|^{2}&=(\delta^{2}a(u)\nabla
w), \nabla e)\leq \|\delta\sqrt{a(u)}\nabla w\|\|\delta\sqrt{a(u)}\nabla e\|\\
&\le
\|\delta\sqrt{a(u)}\nabla e\|^2+\frac14\|\delta\sqrt{a(u)}\nabla w\|^2
\le
\|\delta\sqrt{a(u)}\nabla e\|^2+\frac14\delta^{2}_{\rm max}\|\nabla w\|^2.
\end{align*}
This proves \eqref{estFilt1}.
To show \eqref{estFilt2}, we note that setting $v= F\,
{w}-w$ in \eqref{diff_filter} gives
\begin{equation*}
(\delta^{2}a(u)\nabla F\,
{w},\nabla( F {w}-w))=-\| F {w}-w\|^{2}\leq 0.
\end{equation*}
Hence, we obtain:
\begin{equation}
\|\delta\sqrt{a(u)}\nabla F {w}\|^{2}\leq \|\delta\sqrt{a(u)}\nabla w\|^{2}.\label{fne2}
\end{equation}
Allowing $v=A^{-1}(w- F {w})$ in \eqref{diff_filter}
 leads to the following relations:
\begin{align*}
\|w- F {w}\|_{V'}^{2}&=(w- F {w},A^{-1}(w- F {w}))=(\delta^{2}a(u)\nabla F\,
{w},\nabla A^{-1}(w- F {w}))\\
&\leq\|\delta^{2}a(u)\nabla F {w}\|\|\nabla A^{-1}(w- F {w})\|\leq\frac{1}{2}(\|\delta^{2}a(u)\nabla F {w}\|^{2}+\|w- F {w}\|_{V'}^{2})\\
&\leq\frac{1}{2}\delta^{2}_{\rm max}\|\delta\sqrt{a(u)}\nabla F {w}\|^{2}+\frac{1}{2}\|w- F {w}\|_{V'}^{2}.
\end{align*}
The last estimate and  \eqref{fne2} implies \eqref{estFilt2}.
\end{proof}

Further in this section, we show that $\overline{w^{n+1}}, w^{n+1}$
and $u^{n+1}$ are all strongly  $O((\triangle t)^\frac{1}{2}+\delta)$ approximations to $%
u(t_{n+1})$ in $L^{2}(\Omega)^{3}$ provided $\chi=\chi_{0}\triangle t$.
Then we use this result to improve the error estimates to weakly $O(\triangle t+\delta^2)$
approximations. This analysis largely follows the framework from \cite{JS} and \cite{Shen2} for the pure (non-filtered) Navier-Stokes equations, so we shall
refer to these papers and \cite{OST} for some arguments which do not depend
on the filtering procedure.

\begin{lemma}
\label{lemma_est1} Let $u$ be the solution to the Navier-Stokes system,
satisfying \eqref{A}. Denote
\begin{equation*}
\widetilde{\epsilon^{n+1}}=u(t_{n+1})-\widetilde{w^{n+1}},\;\;%
\epsilon^{n+1}=u(t_{n+1})-w^{n+1},\;\;and\;\;e^{n+1}=u(t_{n+1})-u^{n+1}.
\end{equation*}
The following estimate holds
\begin{equation}  \label{firstEst}
\|\widetilde{\epsilon^{l}}\|^{2}+\sum_{n=0}^{l-1}(\|\epsilon^{n+1}-%
\widetilde{\epsilon^{n+1}}\|^{2}+\|\widetilde{\epsilon^{n+1}}%
-e^{n}\|^{2})+\sum_{n=0}^{l-1}2\nu\triangle t\|\nabla\widetilde{%
\epsilon^{n+1}}\|^{2}\leq C(\triangle t +\delta_{\max}^2).
\end{equation}
\end{lemma}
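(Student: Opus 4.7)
The plan is to mimic the energy argument of Theorem~\ref{LemStab}, applied now to the error equation rather than to the scheme itself. Substituting $\widetilde{w^{n+1}}=w^{n+1}+\triangle t\,\nabla p^{n+1}$ from step~2 into step~1 and subtracting from the Navier--Stokes momentum equation at $t_{n+1}$, one obtains an identity for $\widetilde{\epsilon^{n+1}}$ of the form
\begin{equation*}
\frac{\widetilde{\epsilon^{n+1}}-e^n}{\triangle t}+B^{n+1}-\nu\Delta\widetilde{\epsilon^{n+1}}+\nabla(p(t_{n+1})-p^{n+1})=R^{n+1},
\end{equation*}
where $B^{n+1}=(u(t_{n+1})\cdot\nabla)u(t_{n+1})-(w^n\cdot\nabla)\widetilde{w^{n+1}}$ is the convective consistency error and $R^{n+1}$ is the local truncation error, satisfying $\triangle t\sum_n\|R^{n+1}\|_{-1}^2=O((\triangle t)^2)$ by Taylor expansion and \eqref{en2}.

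Next I would test this identity with $2\triangle t\,\widetilde{\epsilon^{n+1}}$. The discrete time derivative contributes $\|\widetilde{\epsilon^{n+1}}\|^2-\|e^n\|^2+\|\widetilde{\epsilon^{n+1}}-e^n\|^2$. Because $u(t_{n+1})\in V$ and $w^{n+1}\in H$, the cross term $(\epsilon^{n+1},\nabla p^{n+1})$ vanishes, giving the projection identity $\|\widetilde{\epsilon^{n+1}}\|^2=\|\epsilon^{n+1}\|^2+\|\widetilde{\epsilon^{n+1}}-\epsilon^{n+1}\|^2$ and thus recovering two of the dissipative pieces on the left-hand side of \eqref{firstEst}. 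To close the recursion I would pass from $\|\epsilon^{n+1}\|^2$ to $\|e^{n+1}\|^2$ via the relaxation identity $e^{n+1}=\epsilon^{n+1}+\chi_0\triangle t\,Gw^{n+1}$ combined with Lemma~\ref{lemma1}'s bound $\|Gw^{n+1}\|\le\delta_{\max}\|\nabla w^{n+1}\|$, which yields $\|e^{n+1}\|^2\le 2\|\epsilon^{n+1}\|^2+2\chi_0^2(\triangle t)^2\delta_{\max}^2\|\nabla w^{n+1}\|^2$. Summing the perturbation term and invoking Theorem~\ref{LemStab} then produces the claimed $O(\delta_{\max}^2)$ contribution.

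The pressure coupling $2\triangle t(\nabla(p(t_{n+1})-p^{n+1}),\widetilde{\epsilon^{n+1}})$ is the main technical obstacle, since $\widetilde{\epsilon^{n+1}}$ is not divergence free. Following the Rannacher--Shen approach, I would rewrite $\triangle t\nabla p^{n+1}=-(\widetilde{\epsilon^{n+1}}-\epsilon^{n+1})$ and absorb the $p^{n+1}$ portion into the already isolated $\|\widetilde{\epsilon^{n+1}}-\epsilon^{n+1}\|^2$ term; the remaining $p(t_{n+1})$ portion is controlled via $\|\nabla p(t_{n+1})\|\le C$ from \eqref{en1} and Young's inequality against $\nu\triangle t\|\nabla\widetilde{\epsilon^{n+1}}\|^2$, contributing an $O(\triangle t)$ term. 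The nonlinear term is split as $B^{n+1}=((u(t_{n+1})-w^n)\cdot\nabla)u(t_{n+1})+(w^n\cdot\nabla)\widetilde{\epsilon^{n+1}}$; the second piece is eliminated on testing by skew-symmetry (using $\Div w^n=0$), and the first is bounded by the standard trilinear inequality together with the splitting $\|u(t_{n+1})-w^n\|\le\|u(t_{n+1})-u(t_n)\|+\|e^n\|+C\triangle t\,\delta_{\max}\|\nabla w^n\|$ derived from $\epsilon^n=e^n-\chi_0\triangle t\,Gw^n$.

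Summing over $n=0,\dots,l-1$ and applying the discrete Gronwall inequality then gives \eqref{firstEst}, with the $O(\triangle t)$ contribution originating from the truncation and pressure terms and the $O(\delta_{\max}^2)$ piece coming from the filter perturbation introduced during the relaxation step. The hardest aspect of the argument, in my view, is the joint bookkeeping required so that every cross term arising from the pressure, filter, and nonlinear consistency errors either telescopes, collapses into one of the left-hand side dissipative quantities, or is absorbed via the a priori estimate of Theorem~\ref{LemStab}; none of these pieces is individually difficult, but the combination requires careful ordering of the Young-type inequalities.
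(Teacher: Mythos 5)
Your skeleton---error equation, testing with $2\triangle t\,\widetilde{\epsilon^{n+1}}$, the projection orthogonality $\|\widetilde{\epsilon^{n+1}}\|^2=\|\epsilon^{n+1}\|^2+\|\widetilde{\epsilon^{n+1}}-\epsilon^{n+1}\|^2$, skew-symmetry for the convective term, and a final discrete Gronwall step---is the same as the paper's. But two of your estimates do not close as stated. First, the pressure term: bounding $2\triangle t(\nabla p(t_{n+1}),\widetilde{\epsilon^{n+1}})$ by Young's inequality against $\nu\triangle t\|\nabla\widetilde{\epsilon^{n+1}}\|^2$ leaves a complementary term of size $C\nu^{-1}\triangle t\,\|p(t_{n+1})\|^2$ per step, which sums over $n=0,\dots,l-1$ to $O(1)$, not $O(\triangle t)$, and the lemma is lost. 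No choice of the Young parameter against the viscous term can produce a $(\triangle t)^2$ remainder. The paper instead uses that $e^n\in H$ (one could equally use $\epsilon^{n+1}\in H$), so $(\nabla p(t_{n+1}),e^n)=0$, rewrites the term as $2\triangle t(\nabla p(t_{n+1}),\widetilde{\epsilon^{n+1}}-e^n)$, and absorbs it into the dissipative quantity $\tfrac12\|\widetilde{\epsilon^{n+1}}-e^n\|^2$ already sitting on the left, paying only $2(\triangle t)^2\|\nabla p(t_{n+1})\|^2$ per step. This orthogonality is the essential ingredient your write-up is missing.

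Second, the filter perturbation. You quantify the relaxation error by $\|Gw^{n+1}\|\le\delta_{\max}\|\nabla w^{n+1}\|$, but $w^{n+1}$ is the output of the projection step and lies only in $H$, not in $V$, whereas Lemma~\ref{lemma1} requires $w\in V$; even if you replace $\|\nabla w^{n+1}\|$ by $\|\nabla\widetilde{w^{n+1}}\|$ via $H^1$-stability of the Leray projection, that needs the domain regularity the paper assumes only later, in Theorem~\ref{ThMain}. In addition, the crude bound $\|e^{n+1}\|^2\le 2\|\epsilon^{n+1}\|^2+\dots$ cannot feed a Gronwall recursion (the factor $2$ compounds to $2^l$); you need Young with parameter $\triangle t$ to obtain $(1+C\triangle t)\|\epsilon^{n+1}\|^2+C\triangle t(\cdots)$. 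The paper sidesteps both problems at once by decomposing $e^{n}=(1-\chi)\epsilon^{n}+\chi F\epsilon^{n}+\chi\bigl(u(t_{n})-Fu(t_{n})\bigr)$, using only the $L^2$ non-expansiveness $\|F\epsilon^{n}\|\le\|\epsilon^{n}\|\le\|\widetilde{\epsilon^{n}}\|$ for the error part, and applying \eqref{estFilt1} to the exact solution $u(t_{n})$, whose gradient is uniformly bounded by \eqref{A}; this gives $\|e^{n}\|^2\le(1+\triangle t)\|\widetilde{\epsilon^{n}}\|^2+C\triangle t\,\delta_{\max}^2$ per step, which is exactly what the telescoping requires. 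The same remark applies to the term $C\triangle t\,\delta_{\max}\|\nabla w^n\|$ in your splitting of $\|u(t_{n+1})-w^n\|$ for the convective consistency error.
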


\begin{proof}
Let $R^{n}$ denote the truncation error defined by
\begin{equation}
\frac{1}{\triangle t}(u(t_{n+1})-u(t_{n}))-\nu\triangle u(t_{n+1})+(u(t_{n+1})\cdot\nabla)u(t_{n+1})+\nabla p(t_{n+1})=f^{n+1}+R^{n},\label{true}
\end{equation}
where $R^{n}$ is the integral residual of the Taylor series, i.e,
\begin{equation*}
R^{n}=\frac{1}{\triangle t}\int_{t_{n}}^{t_{n+1}}(t-t_{n})u_{tt}(t)dt.
\end{equation*}
By subtracting \eqref{step1} from \eqref{true}, we obtain
\begin{equation}
\frac{1}{\triangle t}(\widetilde{\epsilon^{n+1}}-e^{n})-\nu\triangle\widetilde{\epsilon^{n+1}}=(w^{n}\cdot\nabla)\widetilde{w^{n+1}}-(u(t_{n+1})\cdot\nabla)u(t_{n+1})
-\nabla p(t_{n+1})+R^{n}.\label{er1}
\end{equation}
Taking the $L^2$ scalar product of \eqref{er1} with $2\triangle t\widetilde{\epsilon^{n+1}}$, we get
\begin{multline}
\|\widetilde{\epsilon^{n+1}}\|^{2}-\|e^{n}\|^{2}+\|\widetilde{\epsilon^{n+1}}-e^{n}\|^{2}+2\nu\triangle t\|\nabla\widetilde{\epsilon^{n+1}}\|^{2}=2\triangle t(R^{n},\widetilde{\epsilon^{n+1}})-2\triangle t(\nabla p(t_{n+1}),\widetilde{\epsilon^{n+1}})\\+2\triangle tb^{*}(w^{n},\widetilde{w^{n+1}},\widetilde{\epsilon^{n+1}})-2\triangle tb^{*}(u(t_{n+1}),u(t_{n+1}),\widetilde{\epsilon^{n+1}}).\label{er2}
\end{multline}
The terms on the right-hand side are bounded exactly  the same way as in \cite{JS} p.64 and \cite{Shen2} p.512, leading to the estimates:
\begin{equation}
\triangle t |b^{*}(w^{n},\widetilde{w^{n+1}},\widetilde{\epsilon^{n+1}})- b^{*}(u(t_{n+1}),u(t_{n+1}),\widetilde{\epsilon^{n+1}})| \leq \frac{\nu\triangle t}{2}\|\nabla\widetilde{\epsilon^{n+1}}\|^{2}+C\triangle t\|\epsilon^{n}\|^{2}+C(\triangle t)^{2}\int_{t_{n}}^{t_{n+1}}\|u_{t}\|^{2}dt, \label{non}
\end{equation}
\begin{equation}
2\triangle t(R^{n},\widetilde{\epsilon^{n+1}})\leq\frac{\nu\triangle t}{4}\|\nabla\widetilde{\epsilon^{n+1}}\|^{2}+C(\triangle t)^{2}\int_{t_{n}}^{t_{n+1}}t\|u_{tt}\|_{-1}^{2}dt\label{r},
\end{equation}
\begin{equation}
2\triangle t(\nabla p(t_{n+1}),\widetilde{\epsilon^{n+1}})=2\triangle t(\nabla p(t_{n+1}),\widetilde{\epsilon^{n+1}}-e^{n})\leq\frac{1}{2}\|\widetilde{\epsilon^{n+1}}-e^{n}\|^{2}+2(\triangle t)^{2}\|\nabla p(t_{n+1})\|^{2}.\label{pre}
\end{equation}
Combining the inequalities \eqref{er2}, \eqref{non}, \eqref{r}, \eqref{pre}, and rearranging terms, we obtain
\begin{multline}
\|\widetilde{\epsilon^{n+1}}\|^{2}-\|e^{n}\|^{2}+\frac{1}{2}\|\widetilde{\epsilon^{n+1}}-e^{n}\|^{2}+\nu\triangle t\|\nabla\widetilde{\epsilon^{n+1}}\|^{2}\\\leq2(\triangle t)^{2}\|\nabla p(t_{n+1})\|^{2}+C\triangle t\|\epsilon^{n}\|^{2}+C(\triangle t)^{2}(\int_{t_{n}}^{t_{n+1}}t\|u_{tt}\|_{-1}^{2}dt+\int_{t_{n}}^{t_{n+1}}\|u_{t}\|^{2}dt).\label{er3}
\end{multline}
The step 4 of the algorithm~\eqref{step1}--\eqref{step4} yields
\begin{equation}
e^{n}=(1-\chi)\epsilon^{n}+\chi F(w^{n+1}) {\epsilon^{n}}+\chi(u(t_{n})- F(w^{n+1}) {u(t_{n})}).\label{et}
\end{equation}
The definition of the filter and recalling that $\epsilon^n$ is the $L^2$ projection of $\widetilde{\epsilon^{n}}$ give $\| F(w^{n+1}) {\epsilon^{n}}\|\leq \|\epsilon^{n}\|\le\|\widetilde{\epsilon^{n}}\|$. We use this to deduce from \eqref{et} the following estimate:
\begin{align*}
\|e^{n}\|=(1-\chi)\|\epsilon^{n}\|+\chi\| F(w^{n+1}) {\epsilon^{n}}\|+\chi\|u(t_{n})- F(w^{n+1}) {u(t_{n})}\|
\le\|\widetilde{\epsilon^{n}}\|+\chi\|u(t_{n})- F(w^{n+1}) {u(t_{n})}\|.
\end{align*}
Now we apply \eqref{estFilt1} and square the resulting inequality to  get (for the sake of convenience
we assume $\triangle t \le C$ and recall $\chi=\chi_0\triangle t $):
\begin{equation}\label{e_by_eps}
\|e^{n}\|^2\le(1+\triangle t )\|\widetilde{\epsilon^{n}}\|^2+C\triangle t  \delta_{\max}^2.
\end{equation}
We substitute \eqref{e_by_eps} to the left-hand side of \eqref{er3} for $\|e^{n}\|$, use $ \|\epsilon^{n}\|\le\|\widetilde{\epsilon^{n}}\|$ and arrive at
\begin{multline}
\|\widetilde{\epsilon^{n+1}}\|^{2}-\|\widetilde{\epsilon^{n}}\|^{2}+\|\epsilon^{n+1}-\widetilde{\epsilon^{n+1}}\|^{2}+\frac{1}{2}\|\widetilde{\epsilon^{n+1}}-e^{n}\|^{2}+\nu\triangle t\|\nabla\widetilde{\epsilon^{n+1}}\|^{2}\\ \le 2(\triangle t)^{2}\|\nabla p(t_{n+1})\|^{2}+C\triangle t\|\widetilde{\epsilon^{n}}\|^{2} +C(\triangle t)^{2}\left(\int_{t_{n}}^{t_{n+1}}t\|u_{tt}\|_{-1}^{2}dt +\int_{t_{n}}^{t_{n+1}}\|u_{t}\|^{2}dt\right)
+C\triangle t  \delta_{\max}^2.\label{er5}
\end{multline}
Summing up \eqref{er5} from $n=0$ to $n=l-1$, assuming that $\widetilde{w^{0}}=w^{0}=u_{0}$ (this implies $\|e^{0}\|=\|\epsilon^{0}\|=0$), we obtain
\begin{multline*}
\|\widetilde{\epsilon^{l}}\|^{2}+\sum_{n=0}^{l-1}\|\epsilon^{n+1}-\widetilde{\epsilon^{n+1}}\|^{2}
+\frac{1}{2}\sum_{n=0}^{l-1}\|\widetilde{\epsilon^{n+1}}-e^{n}\|^{2}+\sum_{n=0}^{l-1}\nu\triangle t\|\nabla\widetilde{\epsilon^{n+1}}\|^{2}\\
\leq\sum_{n=0}^{l-l} C\triangle t\|\widetilde{\epsilon^{n}}\|^{2}+2(\triangle t)^{2}\sum_{n=0}^{l-1}\|\nabla p(t_{n+1})\|^{2}+C(\triangle t)^{2}(\int_{t_{0}}^{t_{l}}t\|u_{tt}\|_{-1}^{2}dt+\int_{t_{0}}^{t_{l}}\|u_{t}\|^{2}dt) +C\delta_{\max}^2\\
\leq\sum_{n=0}^{l-1} C\triangle t\|\widetilde{\epsilon^{n}}\|^{2}+C\triangle t+C\delta_{\max}^2.
\end{multline*}
Applying the discrete Gronwall inequality yields \eqref{firstEst}.
\end{proof}

Now, we will use the result of the lemma and improve the predicted order of
convergence for the velocity. The main result in this section is the
following theorem, stating that all $\widetilde{w^{n+1}}$, $w^{n+1}$ and $%
u^{n+1}$ are first-order approximations to the Navier-Stokes solution.

\begin{theorem}\label{ThMain}
Assume the solution to the Navier-Stokes system satisfies \eqref{A} and $%
\chi=\chi_{0}\triangle t$. Suppose $\partial\Omega \in C^{1,1}$ or $\Omega$
is convex. It holds
\begin{equation}  \label{eqTh_vel}
\triangle t\sum_{n=1}^{l}(\|\widetilde{\epsilon^{n}}\|^{2}+\|\epsilon^{n}%
\|^{2}+\|e^{n}\|^{2})\leq C((\triangle t)^{2} +\delta_{\max}^4).
\end{equation}
Additionally assume $\int_{0}^{T}\|\nabla p_{t}\|^2 \le C\, $ and the
filtering radius is bounded as $\delta_{\max}^4\le C\,\triangle t$, then $%
p^{n}$ is an approximation to $p(t_{n})$ in $L^2(\Omega)/R$ in the following
sense:
\begin{equation}
\triangle t \sum^{l}_{n=1} \| p^{n} - p(t_{n}) \|^{2} \le C(\triangle t+
\delta_{\max}^2).  \label{eqTheorem_pres}
\end{equation}
\end{theorem}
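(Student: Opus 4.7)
The plan is to upgrade the strong $L^{2}$ bound of Lemma~\ref{lemma_est1} to a time-averaged estimate of one higher order in both $\triangle t$ and $\delta_{\max}$ via a duality argument in the $V'$ norm, and then to extract the pressure estimate by a discrete inf-sup argument. This mirrors the Shen/Rannacher framework for the pure projection scheme \cite{JS,Shen2}, the only essential novelty being the control of the implicit filter term $\chi_0 G\widetilde{w^n}$ appearing in the compact form \eqref{NS1pr} of the splitting scheme. The two bounds in Lemma~\ref{lemma1} are used at precisely different stages: \eqref{estFilt2} generates the $\delta_{\max}^4$ term in \eqref{eqTh_vel}, while \eqref{estFilt1} (together with the assumption $\delta_{\max}^4\le C\triangle t$) yields the $\delta_{\max}^2$ term in \eqref{eqTheorem_pres}.

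First I would test the error equation \eqref{er1} with $2\triangle t\,A^{-1}\mathbb{P}\widetilde{\epsilon^{n+1}}$ in order to build up $\|\mathbb{P}\widetilde{\epsilon^{n+1}}\|_{V'}^{2}=(A^{-1}\mathbb{P}\widetilde{\epsilon^{n+1}},\widetilde{\epsilon^{n+1}})$, and combine it with the projection identity $\epsilon^{n+1}=\widetilde{\epsilon^{n+1}}+\triangle t\nabla p^{n+1}$ inherited from step~2. The pressure term $(\nabla p(t_{n+1}),A^{-1}\mathbb{P}\widetilde{\epsilon^{n+1}})$ vanishes against the divergence-free projection, as in the standard analysis. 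The nonlinear mismatch $b^{*}(w^{n},\widetilde{w^{n+1}},\cdot)-b^{*}(u(t_{n+1}),u(t_{n+1}),\cdot)$ is estimated by the $V/V'$-duality trilinear bounds, absorbing the $\|\nabla\widetilde{\epsilon^{n+1}}\|^{2}$ factors against the viscous term and using the Lemma~\ref{lemma_est1} bound $\nu\triangle t\sum\|\nabla\widetilde{\epsilon^{n+1}}\|^{2}\le C(\triangle t+\delta_{\max}^{2})$ for closure. The filter contribution, after writing $G\widetilde{w^n}=Gu(t_n)-G\widetilde{\epsilon^n}$, is handled through
\[
|\chi(Gu(t_n),A^{-1}\mathbb{P}\widetilde{\epsilon^{n+1}})|\le \chi\|Gu(t_n)\|_{V'}\|\widetilde{\epsilon^{n+1}}\|_{V'}\le C\triangle t\,\delta_{\max}^{2}\|\widetilde{\epsilon^{n+1}}\|_{V'},
\]
where the bound on $\|Gu(t_n)\|_{V'}$ comes from \eqref{estFilt2} and \eqref{en1}. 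Young's inequality produces $(\triangle t)^{2}\delta_{\max}^{4}$, and the discrete Gronwall lemma then gives $\triangle t\sum_{n=1}^{l}\|\widetilde{\epsilon^{n}}\|_{V'}^{2}\le C((\triangle t)^{2}+\delta_{\max}^{4})$. The interpolation $\|v\|^{2}\le C\|v\|_{V'}\|\nabla v\|$, a Cauchy-Schwarz in $n$, and Lemma~\ref{lemma_est1} for the gradient factor then yield \eqref{eqTh_vel} for $\|\widetilde{\epsilon^{n}}\|$; the bounds for $\|\epsilon^{n}\|$ follow from the $L^{2}$ contractivity of the divergence-free projection ($\|\epsilon^{n}\|\le\|\widetilde{\epsilon^{n}}\|$), and those for $\|e^{n}\|$ from \eqref{e_by_eps}.

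For the pressure estimate I would add steps~1 and~2 to recover the full discrete momentum balance
\[
\tfrac{1}{\triangle t}(w^{n+1}-u^{n})+(w^{n}\cdot\nabla)\widetilde{w^{n+1}}-\nu\Delta\widetilde{w^{n+1}}+\nabla p^{n+1}=f^{n+1},
\]
subtract \eqref{true}, and isolate $\nabla(p(t_{n+1})-p^{n+1})$ in the resulting identity. Pairing with $v\in H_{0}^{1}(\Omega)^{3}$ and invoking the inf-sup condition $\|q\|_{L^{2}/\mathbb{R}}\le C\sup_{v}(q,\Div v)/\|\nabla v\|$, each right-hand side term is bounded individually: the viscous piece by $\|\nabla\widetilde{\epsilon^{n+1}}\|$, the truncation piece by $\|R^{n}\|_{-1}$, the nonlinear mismatch by the already-used trilinear bounds, and the awkward time-difference $(\epsilon^{n+1}-e^{n})/\triangle t$ via a splitting that uses the stability bound $\sum\|\widetilde{\epsilon^{n+1}}-e^{n}\|^{2}\le C(\triangle t+\delta_{\max}^{2})$ from Lemma~\ref{lemma_est1}, the regularity $\int_0^T\|\nabla p_{t}\|^{2}\le C$, and \eqref{estFilt1} for the filter remainder. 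Squaring, multiplying by $\triangle t$, summing, and then using \eqref{eqTh_vel} together with the hypothesis $\delta_{\max}^{4}\le C\triangle t$ to swallow the filter contribution on the right-hand side yields \eqref{eqTheorem_pres}.

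The hard part is accounting for the filter term consistently across the two norms: in the $V'$ estimate one must resist the temptation to use only \eqref{estFilt1}, because $\chi\|Gu(t_n)\|\le C\triangle t\delta_{\max}$ would degrade the final bound to $\delta_{\max}^{2}$ rather than $\delta_{\max}^{4}$; conversely, in the pressure step the residual $(\epsilon^{n+1}-e^{n})/\triangle t$ is only controllable in $L^2$, which is why the stronger assumption $\delta_{\max}^{4}\le C\triangle t$ is imposed, and why the final pressure rate is half an order worse than the velocity rate.
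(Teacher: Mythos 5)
Your overall framework (Shen-type duality in $V'$ for the velocity, Ne\v{c}as/inf-sup for the pressure) is the one the paper follows, and you correctly identify that \eqref{estFilt2} must be used in the $V'$ step to reach $\delta_{\max}^4$. However, there are two places where the argument as written does not close. First, the passage from $\triangle t\sum_n\|\widetilde{\epsilon^{n}}\|_{V'}^{2}\le C((\triangle t)^{2}+\delta_{\max}^{4})$ to \eqref{eqTh_vel} via the interpolation $\|v\|^{2}\le C\|v\|_{V'}\|\nabla v\|$ loses half an order: Cauchy--Schwarz in $n$ gives
\[
\triangle t\sum_n\|\widetilde{\epsilon^{n}}\|^{2}\le C\Bigl(\triangle t\sum_n\|\widetilde{\epsilon^{n}}\|_{V'}^{2}\Bigr)^{1/2}\Bigl(\triangle t\sum_n\|\nabla\widetilde{\epsilon^{n}}\|^{2}\Bigr)^{1/2}\le C(\triangle t+\delta_{\max}^{2})\,(\triangle t+\delta_{\max}^{2})^{1/2},
\]
since Lemma~\ref{lemma_est1} only provides $\triangle t\sum_n\|\nabla\widetilde{\epsilon^{n}}\|^{2}\le C(\triangle t+\delta_{\max}^{2})$; this is $O((\triangle t+\delta_{\max}^2)^{3/2})$, not $O((\triangle t+\delta_{\max}^2)^{2})$. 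The mechanism that actually delivers \eqref{eqTh_vel} is already inside the duality test: pairing the viscous term with $A^{-1}$ of the error produces $\nu\triangle t\|\epsilon^{n+1}\|^{2}$ directly on the left-hand side (the term the paper keeps in \eqref{err3}), so the full-order $L^2$ bound falls out of the Gronwall step with no interpolation. Relatedly, you only treat the $Gu(t_n)$ part of the filter term; the companion piece involving $G\epsilon^{n}$ contributes $\delta_{\max}^{2}\|\nabla\epsilon^{n}\|$ via \eqref{estFilt2}, and converting this to $\|\nabla\widetilde{\epsilon^{n}}\|$ so that Lemma~\ref{lemma_est1} applies is precisely where the hypothesis $\partial\Omega\in C^{1,1}$ or convexity enters ($H^1$-stability of the $L^2$ projection onto $H$) --- your sketch never accounts for that assumption.

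Second, the pressure estimate cannot be closed using only the bound $\sum_n\|\widetilde{\epsilon^{n+1}}-e^{n}\|^{2}\le C(\triangle t+\delta_{\max}^{2})$ from Lemma~\ref{lemma_est1}: the inf-sup argument leaves you with $\frac{1}{\triangle t}\sum_n\|\epsilon^{n+1}-e^{n}\|_{-1}^{2}$ (see \eqref{preq}), and dividing Lemma~\ref{lemma_est1}'s bound by $\triangle t$ gives only $O(1)$, not $O(\triangle t+\delta_{\max}^{2})$. The paper needs, and proves, a genuinely new increment estimate $\sum_n\|\widetilde{\epsilon^{n+1}}-\widetilde{\epsilon^{n}}\|^{2}\le C((\triangle t)^{2}+\triangle t\,\delta_{\max}^{4})$, obtained by forming the equation for $\widetilde{\epsilon^{n+1}}-\widetilde{\epsilon^{n}}$ (equation \eqref{pre5}), testing with the increment, estimating the seven resulting terms, and applying Gronwall; this is exactly where $\int_{0}^{T}\|\nabla p_{t}\|^{2}\le C$ and $\delta_{\max}^{4}\le C\triangle t$ are used. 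Your proposal names these hypotheses but does not identify the increment estimate they are needed for, and without it the rate \eqref{eqTheorem_pres} does not follow.
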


\begin{proof}
Literally reaping the arguments from \cite{JS}, pp. 66-69, one shows the estimate
\begin{multline}
\|\epsilon^{n+1}\|_{V'}^{2}-\|e^{n}\|_{V'}^{2}+\|\epsilon^{n+1}-e^{n}\|_{V'}^{2}+\nu\triangle t\|\epsilon^{n+1}\|^{2}\leq C\Big(\triangle t\|\epsilon^{n+1}\|_{V'}^{2}\\
+(\triangle t)^{2}\int_{t_{n}}^{t_{n+1}}(t\|u_{tt}\|_{-1}^{2}+\|u_{t}\|^{2})dt+(\triangle t)^{2}\|\nabla\widetilde{\epsilon^{n+1}}\|^{2}+\triangle t\|\widetilde{\epsilon^{n+1}}-e^{n}\|^{2}+\triangle t\|\epsilon^{n+1}-\widetilde{\epsilon^{n+1}}\|^{2}\Big).\label{err3}
\end{multline}
The estimate \eqref{estFilt2} gives $\| F {\epsilon^{n}}\|_{V'}\le\|\epsilon^{n}\|_{V'}+\delta_{\max}^2\|\nabla \epsilon^{n}\|$. Here and in the rest of the proof the filtering is based on the $w^{n+1}$ velocity, that is  $F\cdot:= F(w^{n+1})\cdot$.  Due to the assumption $\dO\in C^{1,1}$ or $\Omega$ is convex, the $L^2$ projection on $H$ is $H^1$ stable,
i.e. $\|\nabla \epsilon^{n}\|\le C \|\nabla\widetilde{\epsilon^{n}}\|$ and therefore we conclude
\[
\| F {\epsilon^{n}}\|_{V'}\le\|\epsilon^{n}\|_{V'}+ C\delta_{\max}^2\|\nabla \widetilde{\epsilon^{n}}\|.
\]
Using this  and \eqref{estFilt2}, we get from \eqref{et} for $\chi=\chi_0\triangle t $
\begin{align*}
\|e^{n}\|_{V'}
&=(1-\chi)\|\epsilon^{n}\|_{V'}+\chi\| F {\epsilon^{n}}\|_{V'}+\chi\|u(t_{n})- F {u(t_{n})}\|_{V'}
\leq \|\epsilon^{n}\|_{V'}+C\triangle t \left(\delta_{\max}^2\|\nabla \widetilde{\epsilon^{n}}\|+\|u(t_{n})- F {u(t_{n})}\|_{V'}\right)\\
&\le \|\epsilon^{n}\|_{V'}+C\triangle t \delta_{\max}^2\left(\|\nabla \widetilde{\epsilon^{n}}\|+ 1\right).
\end{align*}
Squaring the inequality, we get after elementary calculations
\[
\|e^{n}\|_{V'}^2\le(1+\triangle t )\|\epsilon^{n}\|_{V'}^2+C\triangle t \delta_{\max}^4\left(\|\nabla \widetilde{\epsilon^{n}}\|^2+ 1\right).
\]
We substitute the above estimate to the left-hand side of \eqref{err3} and arrive at
\begin{multline*}
\|\epsilon^{n+1}\|_{V'}^{2}-\|\epsilon^{n}\|_{V'}^{2}
+\|\epsilon^{n+1}-e^{n}\|_{V'}^{2}+\nu\triangle t\|\epsilon^{n+1}\|^{2}\\
\leq C\Big( \triangle t (\|\epsilon^{n+1}\|_{V'}^{2}+\|\epsilon^{n}\|_{V'}^{2})+(\triangle t)^{2}\int_{t_{n}}^{t_{n+1}}(t\|u_{tt}\|_{-1}^{2}+\|u_{t}\|^{2})dt
+(\triangle t)^{2}\|\nabla\widetilde{\epsilon^{n+1}}\|^{2}\\ +\triangle t(\|\widetilde{\epsilon^{n+1}}-e^{n}\|^{2}+\|\epsilon^{n+1}-\widetilde{\epsilon^{n+1}}\|^{2})
+
\triangle t \delta_{\max}^4(1+\|\nabla \widetilde{\epsilon^{n}}\|^2)\Big).
\end{multline*}
Assume for the sake of convenience $\delta_{\max}\le C$. Summing up the inequalities for $n=0,\dots,l-1$, we get
\begin{multline}
\|\epsilon^{l}\|_{V'}^{2}+\sum_{n=0}^{l-1}\|\epsilon^{n+1}-e^{n}\|_{V'}^{2}+\sum_{n=0}^{l-1}\nu\triangle t\|\epsilon^{n+1}\|^{2}\\
\leq C\left(\sum_{n=0}^{l-1}\triangle t\|\epsilon^{n+1}\|_{V'}^{2}+(\triangle t)^{2}\int_{t_{0}}^{t_{l}}(\|u_{tt}\|_{V'}^{2}+\|u_{t}\|^{2})dt
+\delta_{\max}^4\sum_{n=0}^{l-1}\triangle t\|\nabla\widetilde{\epsilon^{n}}\|^{2}\right.\\\left.+\sum_{n=0}^{l-1}\triangle t\|\widetilde{\epsilon^{n+1}}-e^{n}\|^{2}+\sum_{n=0}^{l-1}\triangle t\|\epsilon^{n+1}-\widetilde{\epsilon^{n+1}}\|^{2}
 + \triangle t\delta_{\max}^4\right).
\label{err4}
\end{multline}
Now we use the result of the Lemma~\ref{lemma_est1} to bound
\begin{equation*}
\triangle t\|\epsilon^{l}\|_{V'}^{2}+\delta_{\max}^4\sum_{n=0}^{l-1}\triangle t\|\nabla\widetilde{\epsilon^{n+1}}\|^{2}+\sum_{n=0}^{l-1}\triangle t\|\widetilde{\epsilon^{n+1}}-e^{n}\|^{2}+\sum_{n=0}^{l-1}\triangle t\|\epsilon^{n+1}-\widetilde{\epsilon^{n+1}}\|^{2}\leq C((\triangle t)^{2}+\triangle t\delta_{\max}^2+\delta_{\max}^4).
\end{equation*}
Thus, applying the Gronwall inequality to \eqref{err4}  yields
\begin{equation}\label{est_aux1}
\|\epsilon^{l}\|_{V'}^{2}+\sum_{n=0}^{l-1}\|\epsilon^{n+1}-e^{n}\|_{V'}^{2}+\sum_{n=0}^{l-1}\nu\triangle t\|\epsilon^{n+1}\|^{2} \leq C((\triangle t)^{2}+\delta^{4}_{\max}).
\end{equation}
Here we also used $\triangle t\delta^{2}_{\max}\le (\triangle t)^2+\delta^{4}_{\max}$.
Finally, the Lemma~\ref{lemma_est1} helps us to estimate
\begin{align*}
\triangle t\sum_{n=0}^{l-1}\|\widetilde{\epsilon^{n+1}}\|^{2}&\leq \triangle t\sum_{n=0}^{l-1}\|\epsilon^{n+1}-\widetilde{\epsilon^{n+1}}\|^{2}+\triangle t\sum_{n=0}^{l-1}\|\epsilon^{n+1}\|^{2}\leq  C((\triangle t)^{2}+\delta^{4}_{\max}).\\
\triangle t\sum_{n=0}^{l}\|e^{n}\|^{2}&\leq \triangle t\sum_{n=0}^{l-1}\|\epsilon^{n+1}-e^{n}\|^{2}+\triangle t\sum_{n=0}^{l-1}\|\epsilon^{n+1}\|^{2}\leq  C((\triangle t)^{2}+\delta^{4}_{\max}).
\end{align*}
These estimates together with \eqref{est_aux1} proves the velocity error estimate of the theorem.

Further we show that the pressure is weakly $\frac12$ order convergent  to the true solution.
Denote the pressure error as  $q^n=p^{n} - p(t_{n})$. We may assume $(q^n,1)=0$. It holds
\begin{equation}
-\nabla q^{n+1}=-\frac{1}{\triangle t}(\epsilon^{n+1}-e^{n})+\nu\triangle\widetilde{\epsilon^{n+1}}+(w^{n}\cdot\nabla)\widetilde{w^{n+1}}-(u(t_{n+1})\cdot\nabla)u(t_{n+1})
+R^{n}.\label{pre1}
\end{equation}
Repeating the arguments from \cite{JS} and using the Ne\v{c}as inequality, see \cite{Necas}, one deduces from   \eqref{pre1}
\[
\|q^{n+1}\|\leq c\sup_{v\in H_{0}^{1}(\Omega)^{3}}\frac{(\nabla q^{n+1},v)}{\|\nabla v\|} \leq \frac{1}{\triangle t}\|\epsilon^{n+1}-e^{n}\|_{-1}+C(\|R^{n}\|_{-1}+\|\nabla\widetilde{\epsilon^{n+1}}\|+\|\nabla{\epsilon^{n+1}}\|+\|u(t_{n+1})-u(t_{n})\|).
\]
Therefore, by using \eqref{firstEst}, we get
\begin{equation}
\triangle t\sum_{n=0}^{l-1}\|q^{n+1}\|^2\leq\frac{1}{\triangle t}\sum_{n=0}^{l-1}\|\nabla(\epsilon^{n+1}-e^{n})\|_{-1}^{2}+C(\triangle t +  \delta_{max}^2).\label{preq}
\end{equation}
To bound the first term on the right-hand side of \eqref{preq} one estimates:
\begin{equation}\label{H_1est}
\|{\epsilon^{n+1}}-e^{n}\|_{-1}\leq c\|{\epsilon^{n+1}}-e^{n}\|\leq
c(\|{\epsilon^{n+1}}-{\epsilon^{n}}\|+\|{\epsilon^{n}}-e^{n}\|)
\le c(\|\widetilde{\epsilon^{n+1}}-\widetilde{\epsilon^{n}}\|+\|{\epsilon^{n}}-e^{n}\|).
\end{equation}
The estimate for the second term on the right-hand side of \eqref{H_1est} follows from \eqref{et}:
\[
\|\epsilon^{n}-e^{n}\| \le \chi_0\triangle t(\|\epsilon^{n}-F {\epsilon^{n}}\|+\|u(t_{n})- F {u(t_{n})}\|)
\le\chi_0\triangle t(\|\epsilon^{n}\|+\|F {\epsilon^{n}}\|+\|u(t_{n})- F {u(t_{n})}\|).
\]
Thanks to \eqref{estFilt1}, \eqref{firstEst},  and $\|F {\epsilon^{n}}\|\le \|{\epsilon^{n}}\|$ we continue the above estimate as
\begin{equation}\label{H_1est2}
\|\epsilon^{n}-e^{n}\| \le C ((\triangle t)^{\frac32} + \triangle t \delta_{\max}).
\end{equation}
Below we shall prove the bound
\begin{equation*}
\sum_{n=0}^{l-1}\|\widetilde{\epsilon^{n+1}}-\widetilde{\epsilon^{n}}\|^{2}\leq  C((\triangle t)^2+ \triangle t \delta_{\max}^2).
\end{equation*}
From \eqref{step1} and \eqref{step4} we get
\begin{equation}
\frac{1}{\triangle t}(\widetilde{\epsilon^{n+1}}-e^{n})-\nu\Delta\widetilde{\epsilon^{n+1}}+\nabla p(t_{n+1})+(w^{n}\cdot\nabla)\widetilde{w^{n+1}}-(u(t_{n+1})\cdot\nabla)u(t_{n+1})=R^{n}. \label{pre2}
\end{equation}
The projection step \eqref{step2} gives
$
\epsilon^{n}=\widetilde{\epsilon^{n}}+\triangle t\nabla p^{n},
$
so \eqref{et} yields
\begin{equation*}
e^{n}=(1-\chi)(\widetilde{\epsilon^{n}}+\triangle t\nabla p^{n})+\chi F {\epsilon^{n}}+\chi(u(t_{n})- F {u(t_{n})}).
\end{equation*}
Substituting this in \eqref{pre2} implies
\begin{multline}
\frac{1}{\triangle t}(\widetilde{\epsilon^{n+1}}-\widetilde{\epsilon^{n}}) -\nu\Delta\widetilde{\epsilon^{n+1}}+(1-\chi)\nabla (p(t_{n+1})-p^{n})
+\chi\nabla p(t_{n+1})-\frac{\chi}{\triangle t}( F \epsilon^{n}-\widetilde{\epsilon^{n}})-\frac{\chi}{\triangle t}(u(t_{n})- F {u(t_{n})}) \\ +(w^{n}\cdot\nabla)\widetilde{w^{n+1}}-(u(t_{n+1})\cdot\nabla)u(t_{n+1})=R^{n}.\label{pre5}
\end{multline}
The inner product of \eqref{pre5} with $\triangle t(\widetilde{\epsilon^{n+1}}-\widetilde{\epsilon^{n}})$ gives
\begin{multline}\label{est58}
\|\widetilde{\epsilon^{n+1}}-\widetilde{\epsilon^{n}}\|^{2}+\frac{\nu\triangle t}{2}(\|\nabla\widetilde{\epsilon^{n+1}}\|^{2}-\|\nabla\widetilde{\epsilon^{n}}\|^{2} +\|\nabla(\widetilde{\epsilon^{n+1}}-\widetilde{\epsilon^{n}})\|^{2})
\\
=\triangle t(R^{n},\widetilde{\epsilon^{n+1}}-\widetilde{\epsilon^{n}})+(1-\chi)\triangle t(p(t_{n+1})-p^{n},\Div(\widetilde{\epsilon^{n+1}}-\widetilde{\epsilon^{n}}))
+\triangle t ((w^{n}\cdot\nabla)\widetilde{w^{n+1}}-(u(t_{n+1})\cdot\nabla)u(t_{n+1}),\widetilde{\epsilon^{n+1}}-\widetilde{\epsilon^{n}})
\\
-\chi\triangle t(\nabla p(t_{n+1}),\widetilde{\epsilon^{n+1}}-\widetilde{\epsilon^{n}})+\chi( F {\epsilon^{n}}-\widetilde{\epsilon^{n}},\widetilde{\epsilon^{n+1}}-\widetilde{\epsilon^{n}})+\chi(u(t_{n})- F {u(t_{n})},\widetilde{\epsilon^{n+1}}-\widetilde{\epsilon^{n}})
\\
=\triangle t(R^{n},\widetilde{\epsilon^{n+1}}-\widetilde{\epsilon^{n}})+(1-\chi)\triangle t\left[(q^{n},\Div(\widetilde{\epsilon^{n+1}}-\widetilde{\epsilon^{n}})) +(p(t_{n+1})-p(t_{n}),\Div(\widetilde{\epsilon^{n+1}}-\widetilde{\epsilon^{n}}))\right]
\\
-\chi\left[\triangle t(\nabla p(t_{n+1}),\widetilde{\epsilon^{n+1}}-\widetilde{\epsilon^{n}}) -( F {\epsilon^{n}}-\widetilde{\epsilon^{n}},\widetilde{\epsilon^{n+1}}-\widetilde{\epsilon^{n}}) -(u(t_{n})- F {u(t_{n})},\widetilde{\epsilon^{n+1}}-\widetilde{\epsilon^{n}})\right]
\\ +\triangle t ((w^{n}\cdot\nabla)\widetilde{w^{n+1}}-(u(t_{n+1})\cdot\nabla)u(t_{n+1}),\widetilde{\epsilon^{n+1}}-\widetilde{\epsilon^{n}})
\\
=I_{1}+I_{2}+I_{3}+I_{4}+I_{5}+I_{6}+I_{7}.
\end{multline}
The last term $I_7$ is estimated in \cite{OST}:
\begin{multline*}
\triangle t |((w^{n}\cdot\nabla)\widetilde{w^{n+1}}-(u(t_{n+1})\cdot\nabla)u(t_{n+1}),\widetilde{\epsilon^{n+1}}-\widetilde{\epsilon^{n}})|\\
\le \sigma\|\widetilde\epsilon^{n+1}- \widetilde\epsilon^{n}\|^2+C((\triangle t)^2 \|\widetilde\epsilon^{n+1}\|^2
+(\triangle t)^2 \|\epsilon^{n+1}\|^2+\triangle t^{\frac32} \|\nabla\epsilon^{n}\|^2\|\nabla\widetilde\epsilon^{n+1}\|^2
+\frac{\nu\triangle t}{2}\|\nabla(\widetilde{\epsilon^{n+1}}-\widetilde{\epsilon^{n}})\|^{2}+(\triangle t)^{3})
\end{multline*}
for some $\sigma>0$, which can be taken sufficiently small. Applying \eqref{firstEst} and $\|\nabla\epsilon^{n}\|\le C \|\widetilde{\nabla\epsilon^{n}}\|$ leads to
\begin{equation}\label{I8}
I_7\le \sigma\|\widetilde\epsilon^{n+1}- \widetilde\epsilon^{n}\|^2+C((\triangle t)^{3} + (\triangle t)^2 \delta^2_{\max})+(\triangle t)^{\frac32} \|\nabla\widetilde{\epsilon^{n}}\|^2\|\nabla\widetilde\epsilon^{n+1}\|^2
+\frac{\nu\triangle t}{2}\|\nabla(\widetilde{\epsilon^{n+1}}-\widetilde{\epsilon^{n}})\|^{2}.
\end{equation}
For $I_{4}$, $I_{5}$,  and $I_{6}$
one has
\begin{align}\label{est69}
I_{4}&=-\chi\triangle t(\nabla p(t_{n+1}),\widetilde{\epsilon^{n+1}}-\widetilde{\epsilon^{n}})\leq C\chi^{2}(\triangle t)^{2}\|\nabla p(t_{n+1})\|^{2}+\sigma\|\widetilde{\epsilon^{n+1}}-\widetilde{\epsilon^{n}}\|^{2},
\\
I_{5}&=\chi( F {\epsilon^{n}}-\widetilde{\epsilon^{n}},\widetilde{\epsilon^{n+1}}-\widetilde{\epsilon^{n}}) \leq C\chi^{2}(\| F {\epsilon^{n}}\|^{2}+\|\widetilde{\epsilon^{n}}\|^2)+\sigma\|\widetilde{\epsilon^{n+1}}-\widetilde{\epsilon^{n}}\|^{2}\nonumber \\ &\leq C((\triangle t)^{3} + (\triangle t)^2 \delta^2_{\max}) +\sigma\|\widetilde{\epsilon^{n+1}}-\widetilde{\epsilon^{n}}\|^{2},
\\
I_{6}&=\chi(u(t_{n})- F {u(t_{n})},\widetilde{\epsilon^{n+1}}-\widetilde{\epsilon^{n}})\leq C (\triangle t)^2 \delta^4_{\max} +\sigma\|\widetilde{\epsilon^{n+1}}-\widetilde{\epsilon^{n}}\|^{2}.\label{pre9}
\end{align}
The terms $I_{1}$, $I_{2}$ and $I_{3}$ are estimated in \cite{JS}. Using those estimates and \eqref{I8}--\eqref{pre9} in \eqref{est58} yields for sufficiently small $\sigma>0$:
\begin{multline} \label{aux2}
\|\widetilde{\epsilon^{n+1}}-\widetilde{\epsilon^{n}}\|^{2}+\frac{\nu\triangle t}{2}(\|\nabla\widetilde{\epsilon^{n+1}}\|^{2}-\|\nabla\widetilde{\epsilon^{n}}\|^{2})+(1-\chi)(\triangle t)^{2}(\|\nabla q^{n+1}\|^{2}-\|\nabla q^{n}\|^{2})
\\
\leq C\left\{(\triangle t)^{2}\int_{t_{n}}^{t_{n+1}}\|u_{tt}\|^{2}dt+(\triangle t)^{2}\int_{t_{n}}^{t_{n+1}}\|\nabla p_{t}\|^{2}dt+(\triangle t)^{4}\|\nabla p(t_{n+1})\|^{2}\right.\\\left.+(\triangle t)^{3} + (\triangle t)^2 \delta^2_{\max}+ \triangle t^{\frac32} \|\nabla\widetilde{\epsilon^{n}}\|^2\|\nabla\widetilde\epsilon^{n+1}\|^2\right\}.
\end{multline}
We sum up the estimate for $n=0,\dots,l-1$ and apply our assumptions for the solution to  Navier-Stokes solution. This leads to the bound
\begin{equation*}
\sum_{n=0}^{l-1}\|\widetilde{\epsilon^{n+1}}-\widetilde{\epsilon^{n}}\|^{2}
+\frac{\nu\triangle t}{2}\|\nabla\widetilde{\epsilon^{l}}\|^{2}
\leq C((\triangle t)^{2}+ \triangle t \delta_{\max}^4+ (\triangle t)^{\frac32} \sum_{n=0}^{l-1} \|\nabla\widetilde{\epsilon^{n}}\|^2\|\nabla\widetilde\epsilon^{n+1}\|^2).
\end{equation*}
The application of the discrete Gronwall inequality, \eqref{firstEst} and the assumption $\delta^4_{\max}\le C\triangle t$ yields
\begin{align*}
\sum_{n=0}^{l-1}\|\widetilde{\epsilon^{n+1}}-\widetilde{\epsilon^{n}}\|^{2}
+\frac{\nu\triangle t}{2}\|\nabla\widetilde{\epsilon^{l}}\|^{2}&
\le C\,((\triangle t)^2+ \triangle t \delta_{\max}^2)\,\exp\left\{(\triangle t)^{\frac12}\sum_{n=0}^{l-1}\|\nabla\widetilde{\epsilon^{n+1}}\|^2\right\}
\\
&\le C\,((\triangle t)^2+ \triangle t \delta_{\max}^2)\,\exp\left\{C((\triangle t)^{\frac12}+(\triangle t)^{-\frac12}\delta_{\max}^4)\right\}
\\
&\le C((\triangle t)^2+ \triangle t \delta_{\max}^4).
\end{align*}
Therefore, \eqref{preq}--\eqref{H_1est2} yield the desired bound:
\begin{equation*}
\triangle t\sum_{n=0}^{l-1}\|q^{n+1}\|^2\leq C(\triangle t+ \delta_{\max}^2).
\end{equation*}
\end{proof}

\end{document}